 \numberwithin{equation}{section}
\newtheorem{thm}{Theorem}[section]
\newtheorem{lem}[thm]{Lemma}
\theoremstyle{definition}
\newtheorem{rem}[thm]{Remark}
\begin{document}

\begin{frontmatter}



\title{The regularity criterion  for 3D Navier-Stokes
Equations involving  one   velocity  gradient  component}


\author{Daoyuan Fang}
\ead{dyf@zju.edu.cn}
\author{Chenyin Qian}
\ead{qcyjcsx@163.com}
\address{Department of Mathematics, Zhejiang University, Hangzhou 310027, China}
\begin{abstract}
In this article, we establish sufficient conditions for the
regularity of solutions of Navier-Stokes equations based on one of
the nine entries of the gradient tensor. We improve the recently
results of C.S. Cao, E.S. Titi (Arch. Rational Mech.Anal. 202 (2011)
919-932) and Y. Zhou, M. Pokorn$\acute{\mbox{y} }$ (Nonlinearity 23,
1097-1107 (2010)).
\end{abstract}

\begin{keyword}
3D Navier-Stokes equations; Leray-Hopf weak solution; Regularity
criterion
\end{keyword}

\end{frontmatter}

\section{Introduction}
We consider sufficient conditions for the regularity of  weak
solutions of the Cauchy problem for the Navier-Stokes equations
\begin{equation} \label{a}
 \left\{\begin{array}{l}
\displaystyle \frac{\partial u}{\partial t}-\nu \Delta
u+(u\cdot\nabla)u+\nabla p=0,
\ \mbox{\ in}\ \mathbb{R}^{3}\times (0,T),\\
\displaystyle\nabla\cdot u=0,\hspace{3.52cm} \mbox{\ in}\ \mathbb{R}^{3}\times (0,T),\\
\displaystyle u(x, 0)=u_{0},\hspace{0.2cm}\mbox{\ in}\ \mathbb{R}^{3},\\
\end{array}
\right. \end{equation} where $u=(u_{1},u_{2},u_{3}):
\mathbb{R}^{3}\times (0,T)\rightarrow \mathbb{R}^{3}$ is the
velocity field, $ p: \mathbb{R}^{3}\times (0,T)\rightarrow
\mathbb{R}^{3}$ is a scalar pressure, and $u_{0}$ is the initial
velocity field, $\nu>0$ is the viscosity.  We set
$\nabla_{h}=(\partial_{x_{1}},\partial_{x_{2}})$ as the horizontal
gradient operator and
$\Delta_{h}=\partial_{x_{1}}^{2}+\partial_{x_{2}}^{2}$ as the
horizontal Laplacian, and $\Delta$ and $\nabla$ are the usual
Laplacian and the gradient operators, respectively.  Here we use the
classical notations
$$
(u\cdot\nabla)v=\sum_{i=1}^{3}u_{i}\partial_{x_{i}}v_{k}, \ (
k=1,2,3),\ \ \ \nabla\cdot u=\sum_{i=1}^{3}\partial_{x_{i}}u_{i},
$$
 and for sake of simplicity,  we denote $\partial_{x_{i}}$
by $\partial_{{i}}$.
\par We set
$$
\mathcal {V}=\{\phi: \ \mbox{ the 3D vector valued}\ C_{0}^{\infty}
\ \mbox{functions and}\ \nabla\cdot\phi=0\},
$$
which will form the space of test functions. Let $H$ and $V$ be the
closure spaces of $\mathcal {V}$ in $L^2$ under $L^2$-topology, and
in $H^1$ under $H^1$-topology, respectively.
\par
For $u_0\in H$,  the existence of weak solutions of (1.1) was
established by Leray  \cite{[14]} and Hopf in  \cite{[9]}, that is,
$u$
satisfies the following properties:\\
(i) $u\in C_{w}([0,T); H)\cap L^{2}(0,T; V)$, and $\partial_{t}u\in
L^{1}(0,T; V^{\prime})$, where $V^{\prime}$ is the dual space of
$V$;\\
(ii) $u$ verifies (1.1) in the sense of distribution, i.e., for
every test function $\phi\in C^{\infty}([0,T);\mathcal {V})$, and
for almost every $t, t_{0}\in (0,T)$, we have
$$
\begin{array}{ll}
 \displaystyle&\displaystyle\int_{\mathbb{R}^{3}} u(x,t)\cdot\phi(x,t)dx-\int_{\mathbb{R}^{3}}
u(x,t_{0})\cdot\phi(x,t_{0})dx\vspace{2mm}\\
\displaystyle &\ \ \ \
=\displaystyle\int_{t_{0}}^{t}\int_{\mathbb{R}_{3}}[u(x,t
)\cdot(\phi_{t}(x,t)+\nu\Delta\phi(x,t))]dxds\vspace{2mm}\\
&\ \ \ \ \ \ \
+\displaystyle\int_{t_{0}}^{t}\int_{\mathbb{R}_{3}}[(u(x,t)\cdot\nabla)\phi(x,t)]\cdot
u(x,t))]dxds;
\end{array}
$$
 (iii) The energy inequality,
i.e.,
$$
\|u(\cdot,t)\|_{L^{2}}^{2}+2\nu\int_{t_0}^{t}\|\nabla
u(\cdot,s)\|_{L^{2}}^{2}ds\leq\|u_{0}\|_{L^{2}}^{2},
$$
for every $t$ and almost every $t_{0}$.

It is well known, if $u_{0}\in V$, a weak solution
of (1.1) on $(0, T)$ becomes  strong if it satisfies
$$
u\in C([0,T); V)\cap L^{2}(0,T; H^{2}) \ \mbox{and}\
\partial_{t}u\in L^{2}(0,T; H).
$$
We know the strong solution is regular(say, classical) and unique
(see, for example, \cite{[21]}, \cite{[22]}).
\par    For the 2D case, just as the authors  said in \cite{[3]},
 the Navier-Stokes equations \eqref{a} have unique weak and
strong solutions which exist globally in time. However, the global
regularity of solutions  for the 3D Navier-Stokes equations is a
major and challenging problem, the weak solutions are known to exist
globally in time, but the uniqueness, regularity, and continuous
dependence on initial data for weak solutions are still open
problems. Furthermore, strong solutions in the 3D case are known to
exist for a short interval of time whose length depends on the
initial data. Moreover, this strong solution is known to be unique
and to depend continuously on the initial data.
\par There are many  interesting sufficient conditions which guarantee
that a given weak solution is smooth (see, for example,
\cite{[4]}-\cite{[19]}), and the first result is usually referred as
Prodi-Serrin conditions (see \cite{[18]} and \cite{[20]}), which
states that if a weak solution $u$ is in the class of
$$ u\in L^{t}(0,T; L^{s}(\mathbb{R}^{3})),\ \
\frac{2}{t}+\frac{3}{s}=1,\ s\in[3,\infty],
$$ then the weak solution becomes regular.
\par A better result was showed by  Neustupa, Novotny, and Penel
(see  \cite{[16]}). More precisely, the solution is regular if
$$ u_{3}\in
L^{t}(0,T; L^{s}(\mathbb{R}^{3})),\ \
\frac{2}{t}+\frac{3}{s}\leq\frac{1}{2},\ s\in(6,\infty].
$$
This result was improved in \cite{[10]} to
$$ u_{3}\in
L^{t}(0,T; L^{s}(\mathbb{R}^{3})),\ \
\frac{2}{t}+\frac{3}{s}\leq\frac{5}{8},\ s\in(\frac{24}{5},\infty].
$$
C.S. Cao, E.S. Titi in \cite{[3]} considered the regularity of
solutions to the  3D Navier-Stokes equations subject to periodic
boundary conditions or in the whole space and obtained  better
results
$$
 u_{3}\in
L^{t}(0,T; L^{s}(\mathbb{R}^{3})),\ \
\frac{2}{t}+\frac{3}{s}\leq\frac{2(s+1)}{3s},\ s>\frac{7}{2},
$$
or
$$
 u_{3}\in
L^{\infty}(0,T; L^{s}(\mathbb{R}^{3})),\ \mbox{with} \
s>\frac{7}{2}.
$$
Furthermore, this work was improved by Y. Zhou, M.
Pokorn$\acute{\mbox{y} }$ in \cite{[26]}, the authors considered the
following additional assumptions to get the regularity of solution
of 3D Navier-Stokes equations
$$ u_{3}\in
L^{t}(0,T; L^{s}(\mathbb{R}^{3})),\ \
\frac{2}{t}+\frac{3}{s}\leq\frac{3}{4}+\frac{1}{2s},\
s>\frac{10}{3}.
$$ The
full regularity of weak solutions can also be proved under
alternative assumptions on the gradient of the velocity $\nabla u$.
Specifically (see \cite{[1]}), if
$$
\nabla u\in L^{t}(0,T; L^{s}(\mathbb{R}^{3})),\ \
\frac{2}{t}+\frac{3}{s}=2,\ s\in[\frac{3}{2},\infty].
$$
A comparable result for the gradient of one velocity component was
improved in \cite{[17]} to
$$
\nabla u_{3}\in L^{t}(0,T; L^{s}(\mathbb{R}^{3})),\ \
\frac{2}{t}+\frac{3}{s}\leq\frac{3}{2},\ s\in[{2},\infty].
$$
There are many  similar results, we refer to the references
\cite{[27]}. In  \cite{[26]}, the authors also studied the
regularity of the solutions of the Navier-Stokes equations under the
assumption on $\partial_{3}u_{3}$, namely,
\begin{equation}\label{bb}
\partial_{3}u_{3}\in L^{\beta}(0,T; L^{\alpha}(\mathbb{R}^{3})),\
\frac{3}{\alpha}+\frac{2}{\beta}< \frac{4}{5},\
\alpha\in(\frac{15}{4}, \infty].
\end{equation}
\par
Very recently, C.S. Cao and E.S. Titi considered the more general case
in \cite{[2]}, in which authors  provided sufficient conditions, in
terms of only one of the nine components of the gradient of velocity
field (i.e., the velocity Jacobian matrix) that guarantee the global
regularity of the 3D Navier-Stokes equations. The authors divided
into cases to discuss the regularity of the weak solution, namely,
given the condition
$$
\frac{\partial u_{j}}{\partial x_{k}}\in L^{\beta}(0,T;
L^{\alpha}(\mathbb{R}^{3})), \ \mbox{when} \ j\neq k
$$
\begin{equation}\label{c}
\mbox{and where}  \ \alpha>3, 1\leq\beta<\infty, \mbox{and}\
\frac{3}{\alpha}+\frac{2}{\beta}\leq\frac{\alpha+3}{2\alpha},\end{equation}
or
$$
\frac{\partial u_{j}}{\partial x_{j}}\in L^{\beta}(0,T;
L^{\alpha}(\mathbb{R}^{3})),
$$
\begin{equation}\label{d}
\mbox{and where}  \ \alpha>2, 1\leq\beta<\infty, \mbox{and} \
\frac{3}{\alpha}+\frac{2}{\beta}\leq\frac{3(\alpha+2)}{4\alpha}.\end{equation}
Moreover, Z.J. Zhang studied the Cauchy problem for the 3D
Navier-Stokes equations, and proved some scalaring-invariant
regularity criteria involving only one velocity component in
\cite{[31]}. The author proved that the weak solution $u$
 to \eqref{a} with  datum  $u_0\in V$ is regular, if
\begin{equation}\label{ff1} u_3\in L^{p}(0,T; L^{q}(\mathbb{R}^{3})),
\partial_3u_3\in L^{\beta}(0,T; L^{\alpha}(\mathbb{R}^{3})),
\end{equation}
with $1\leq p, q, \beta, \alpha\leq\infty$, $0\leq\lambda,
\gamma<\infty$ satisfying
\begin{equation} \label{ff3}
 \left\{\begin{array}{l}
\displaystyle \frac{2}{p}+\frac{3}{q}=\lambda,
\frac{2}{\beta}+\frac{3}{\alpha}=\gamma,\vspace{2mm}\\
\displaystyle
(1-\frac{1}{\alpha})q=\frac{1/\beta+3/8}{3/8-1/p}=\frac{9/4-\gamma}{\lambda-3/4
}>1,\vspace{2mm}\\
\displaystyle \ \beta<\infty \ \mbox{or}\  p<\infty.\\
\end{array}
\right. \end{equation}

Motivated by \cite{[2]} and \cite{[31]}, in this article, we
consider the alternative assumptions on one velocity gradient
component, and we improve the results of  \cite{[2]}. The key point
of our approach is that we start with the estimate of the norm
$\|u_{3}\|_{q},$ where $q$ satisfies $q\geq2$, and then construct
some new estimates. We also improve the result of \cite{[26]}. From
our argument, one can know which cause the difference of the both
results in \cite{[26]} and \cite{[2]}.
 \par Our main results  can be stated in the
following:
\begin{thm}\label{t1.1}
Let  $u_{0}\in V$, and  assume $u$ is a Leray-Hopf weak solution to
the 3D Navier-Stokes equations \eqref{a}. Suppose for any $j, k$ with $1\leq j, k\leq 3$, we have
\begin{eqnarray}\label{e} \partial_{j}u_{k}\in L^{\infty}(0,T;
L^3(\mathbb{R}^3)).
\end{eqnarray}
 Then $u$ is
regular.
\end{thm}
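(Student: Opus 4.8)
The plan is to establish an a priori bound on a suitable higher-order energy quantity under the hypothesis \eqref{e}, namely $\partial_j u_k \in L^\infty(0,T; L^3(\mathbb{R}^3))$ for a single pair $(j,k)$. Since the theorem states the conclusion for \emph{any} $j,k$, the natural strategy is to prove it uniformly, but the two genuinely distinct cases are the off-diagonal case $j\neq k$ and the diagonal case $j=k$; by symmetry of the coordinate directions it suffices to treat one representative of each (say $\partial_1 u_2$ and $\partial_3 u_3$). The overall architecture is the standard one for regularity criteria: I would test the velocity equation against $-\Delta u$ (or work with the vorticity / gradient), integrate by parts to produce the dissipation term $\nu\|\nabla\nabla u\|_{L^2}^2$ on the left and a trilinear convective term on the right, and then show that this trilinear term can be absorbed into the dissipation provided \eqref{e} holds.

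The core estimate is to bound the nonlinear term $\bigl|\int (u\cdot\nabla)u\cdot\Delta u\,dx\bigr|$. After integration by parts this reduces to a sum of terms of the form $\int \partial_i u_\ell\,\partial_i u_m\,\partial_m u_\ell\,dx$; using the divergence-free condition and integration by parts one rearranges these so that the hypothesized component $\partial_j u_k$ appears as a multiplicative factor. The plan is then to pull out $\partial_j u_k$ in $L^3$ and estimate the remaining factor by a product of $\nabla u$-type terms via H\"older with exponents $3,6,\ldots$, interpolation ($L^6 \hookrightarrow$ via Sobolev from $\nabla u \in H^1$), and the Gagliardo--Nirenberg inequality
\begin{equation}\label{gn}
\|\nabla u\|_{L^6(\mathbb{R}^3)}\le C\,\|\nabla u\|_{L^2}^{1/2}\,\|\nabla\nabla u\|_{L^2}^{1/2},
\end{equation}
so that the whole trilinear term is controlled by $C\,\|\partial_j u_k\|_{L^3}\,\|\nabla u\|_{L^2}\,\|\nabla\nabla u\|_{L^2}$. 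Because $\|\partial_j u_k\|_{L^3}$ is bounded in $L^\infty_t$ by assumption, Young's inequality $ab\le \tfrac{\nu}{2}a^2 + C b^2$ lets me absorb the factor $\|\nabla\nabla u\|_{L^2}$ into the dissipation, leaving a differential inequality of Gronwall type,
\begin{equation}\label{gron}
\frac{d}{dt}\|\nabla u\|_{L^2}^2 + \nu\|\nabla\nabla u\|_{L^2}^2 \le C\,\|\partial_j u_k\|_{L^3}^2\,\|\nabla u\|_{L^2}^2,
\end{equation}
whose right-hand coefficient lies in $L^\infty(0,T)$, yielding a uniform bound on $\|\nabla u(\cdot,t)\|_{L^2}$ on $[0,T)$.

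The main obstacle I anticipate is the algebraic bookkeeping that isolates exactly the single entry $\partial_j u_k$ from the trilinear expression, particularly in the diagonal case $j=k$ where the divergence-free constraint $\sum_i \partial_i u_i = 0$ must be invoked to re-express the missing derivatives, and in ensuring that the interpolation exponents balance to give precisely the scaling-critical combination that makes the $L^\infty(0,T;L^3)$ hypothesis sharp. A secondary technical point is justifying the formal $-\Delta u$ testing rigorously: this is only legitimate for the strong solution, so I would first carry out the computation on a smooth Galerkin approximation (or on the known local-in-time strong solution) and then pass to the limit, using the a priori bound \eqref{gron} to continue the strong solution up to $T$ and conclude regularity on all of $(0,T)$.

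Once the uniform bound on $\|\nabla u\|_{L^\infty(0,T;L^2)}\cap\|\nabla\nabla u\|_{L^2(0,T;L^2)}$ is in hand, the passage to full regularity is standard: it places $u$ in $C([0,T);V)\cap L^2(0,T;H^2)$ with $\partial_t u\in L^2(0,T;H)$, which by the criterion recalled in the introduction (following \cite{[21]},\cite{[22]}) guarantees that $u$ is the unique strong, hence classical, solution. This completes the argument for a single pair $(j,k)$, and by the symmetry remark the same conclusion holds for all nine choices, establishing the theorem.
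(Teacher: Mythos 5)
Your proposed core estimate --- bounding the trilinear term by $C\|\partial_j u_k\|_{L^3}\|\nabla u\|_{L^2}\|\nabla\nabla u\|_{L^2}$ after ``rearranging'' $\sum\int\partial_i u_\ell\,\partial_i u_m\,\partial_m u_\ell\,dx$ so that the single entry $\partial_j u_k$ appears as a factor in every summand --- is the step that fails, and it is not a bookkeeping issue. No such rearrangement exists: integration by parts and the constraint $\nabla\cdot u=0$ let you permute which derivative falls on which factor and trade $\partial_1u_1+\partial_2u_2$ for $-\partial_3u_3$, but they cannot convert, say, $\int(\partial_2u_2)^3\,dx$ into an expression carrying the factor $\partial_1u_3$. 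If your bound were available, Gronwall would give regularity under the scaling-critical condition $\partial_j u_k\in L^2(0,T;L^3)$ for a single entry, which is far stronger than the theorem, stronger than the Cao--Titi conditions \eqref{c}--\eqref{d}, and stronger than anything known. The paper's actual route is structurally different: testing with $-\Delta u$ and $-\Delta_h u$, the trilinear terms are reduced (Lemmas \ref{l2.2} and \ref{l2.3}) to $\int|u_3||\nabla u||\Delta u|\,dx$ and $\int|\nabla_h u||\nabla u|^2\,dx$, i.e.\ to the full component $u_3$, not to $\partial_ju_k$; the hypothesis on the single gradient entry then enters only indirectly, through the anisotropic Troisi inequality \eqref{k} in the $\|u_3\|_q$ estimate of Lemma \ref{l2.1} (via the pressure) and through the one-dimensional bound $\max_{x_1}|u_3|^{r}\le r\int|u_3|^{r-1}|\partial_1u_3|\,dx_1$ used in \eqref{8} and \eqref{13}.

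A second, independent problem is that even with that machinery the endpoint $\alpha=3$, $\beta=\infty$ does not close by Young absorption plus Gronwall as you envisage. With $\alpha=3$, $q=2$ one gets $r=7/3$, so $\frac{1}{r-1}+\frac14=1$ and the term $[\,\cdots]^{1/4}\int_0^t\|\Delta u\|_2^2\,d\tau$ in \eqref{11} carries the dissipation to the first power with a coefficient that is bounded but not small; it cannot be absorbed by Young's inequality. The paper handles this by choosing $t_1$ so small that $C\|u_3\|^2_{L^\infty L^2}\|\partial_1u_3\|^{3/2}_{L^\infty L^3}[\int_0^{t_1}\|\nabla u\|_2^2\,d\tau]^{1/4}\le\nu/4$ (possible by the energy inequality) and then iterating over finitely many subintervals, the number of steps depending only on $K_1$ and $M$. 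Your plan contains neither the anisotropic reduction nor this finite bootstrap in time, so as written it does not yield a proof.
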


\begin{thm}\label{t1.3}
Let  $u_{0}$ and  $u$ be as   in  Theorem \ref{t1.1}. For any $j, k$
with
$1\leq j, k\leq 3$.\\
 (i) For $ j\neq k$,  suppose that $u$ satisfies
\begin{eqnarray}\label{z} \int_{0}^{T}\|
\partial_{j}u_{k}\|_{\alpha}^{\beta}d\tau\leq M, \ \mbox{for some} \ M>0, \end{eqnarray}
with
\begin{equation} \label{f}
 \frac{3}{2\alpha}+\frac{2}{\beta}\leq
\displaystyle f(\alpha), \  \alpha\in (3,\infty)\ \mbox {and}\
1\leq\beta<\infty,  \end{equation} where
\begin{equation}\label{m}
f(\alpha)=\frac{\sqrt{103\alpha^2-12\alpha+9}-9\alpha}{2\alpha};
\end{equation}
(ii)For $j=k$, suppose
\begin{eqnarray}\label{zz1} \int_{0}^{T}\|
\partial_{k}u_{k}\|_{\alpha}^{\beta}d\tau\leq M, \ \mbox{for some} \ M>0,\end{eqnarray}
with
\begin{equation} \label{zz2}
 \frac{3}{2\alpha}+\frac{2}{\beta}\leq
\displaystyle g(\alpha), \  \frac{9}{5}<\alpha<\infty\ \mbox {and}\
1\leq\beta<\infty,\end{equation} where
$$
g(\alpha)=\frac{\sqrt{289\alpha^2-264\alpha+144}-7\alpha}{8\alpha}.
$$
\par Then $u$ is  regular.
\end{thm}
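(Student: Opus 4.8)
The plan is to reduce the theorem to an a priori bound, uniform on $[0,T]$, for the $H^1$-norm of the solution. By the local existence theory and the continuation principle recalled in the introduction — a Leray--Hopf weak solution with $u_0\in V$ is strong, hence regular, on any interval where $\|\nabla u(t)\|_{L^2}$ stays finite — it suffices to show that \eqref{z} (resp.\ \eqref{zz1}) forces $\sup_{0\le t\le T}\|\nabla u(t)\|_{L^2}^2+\nu\int_0^T\|\Delta u\|_{L^2}^2\,dt<\infty$. To this end I would first test the momentum equation in \eqref{a} with $-\Delta u$; the pressure term vanishes by incompressibility, and one integration by parts converts the convective term into a trilinear expression, giving
$$
\frac12\frac{d}{dt}\|\nabla u\|_{L^2}^2+\nu\|\Delta u\|_{L^2}^2
=-\sum_{i,j,k}\int_{\mathbb{R}^3}\partial_k u_i\,\partial_i u_j\,\partial_k u_j\,dx.
$$
The whole difficulty is to dominate this cubic term using only the single controlled entry $\partial_j u_k$ together with the dissipation $\nu\|\Delta u\|_{L^2}^2$.

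Following the strategy announced before the statement, the next step is a preliminary bound on one velocity component. I would multiply the third component equation by $|u_3|^{q-2}u_3$ and integrate, with $q\ge 2$ chosen according to $\alpha$. The viscous term yields the good quantity $\|\nabla(|u_3|^{q/2})\|_{L^2}^2$, while the pressure contribution $\int\partial_3 p\,|u_3|^{q-2}u_3\,dx$ is handled by writing $p=\sum_{i,l}R_iR_l(u_iu_l)$ through the Riesz transforms and invoking Calder\'on--Zygmund bounds. Estimating the convective and pressure terms by H\"older and Gagliardo--Nirenberg inequalities in which the controlled component $\partial_j u_k$ is made to appear, one arrives at a differential inequality for $\|u_3\|_{L^q}^q$ whose right-hand side is integrable in time precisely because of \eqref{z} (resp.\ \eqref{zz1}); this delivers $u_3\in L^\infty(0,T;L^q)$.

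With this component bound in hand, I would return to the trilinear term and split it according to whether the derivatives are horizontal or vertical, using $\nabla\cdot u=0$ to trade the diagonal derivative for the horizontal divergence in the case $j=k$. Each piece is then estimated by an anisotropic H\"older/Gagliardo--Nirenberg argument, placing $\partial_j u_k$ in $L^\alpha$, the component factor furnished by the preliminary step in $L^q$, and the remaining factors in terms of $\|\nabla u\|_{L^2}$ and $\|\Delta u\|_{L^2}$. After absorbing every occurrence of $\|\Delta u\|_{L^2}$ into the dissipation by Young's inequality, the estimate collapses to
$$
\frac{d}{dt}\|\nabla u\|_{L^2}^2+\nu\|\Delta u\|_{L^2}^2\le C\big(1+\|\partial_j u_k\|_{L^\alpha}^{\beta}\big)\|\nabla u\|_{L^2}^2,
$$
and Gr\"onwall's inequality together with \eqref{z} (resp.\ \eqref{zz1}) closes the argument.

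The main obstacle is not the scheme but the bookkeeping of exponents: the thresholds $f(\alpha)$ in \eqref{m} and $g(\alpha)$ are exactly the values at which the interpolation exponents in the trilinear estimate balance against the scaling of $\|\Delta u\|_{L^2}^2$, so that Young's inequality can absorb the dissipation with a strictly subcritical remainder. The square roots appearing in $f$ and $g$ reflect that these critical exponents are the roots of a quadratic obtained from this balancing, and the care required to extract the sharp constants rather than a merely qualitative criterion is the genuinely delicate part. The diagonal case $j=k$ is the more favorable one — it permits the wider range $\frac{9}{5}<\alpha$ against $\alpha>3$ for the off-diagonal case — precisely because incompressibility lets one replace $\partial_k u_k$ by a combination of the other two diagonal derivatives, which improves the available interpolation.
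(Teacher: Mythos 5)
Your overall scaffolding (local strong solution plus continuation, a preliminary $L^\infty_t L^q_x$ bound on $u_3$ obtained by testing with $|u_3|^{q-2}u_3$ and handling the pressure by Calder\'on--Zygmund, then an $H^1$ energy estimate closed by Gr\"onwall) matches the paper's, and your treatment of the pressure in the two cases (Riesz transforms for $j\neq k$; in effect $\|p\|_\mu\le C\|u\|_{2\mu}^2$ after moving $\partial_3$ onto the test function for $j=k$) is the right idea. But there is a genuine structural gap in the step where you "return to the trilinear term." After the standard decomposition of $-\sum_{i,j,k}\int\partial_k u_i\,\partial_i u_j\,\partial_k u_j\,dx$, one piece is bounded by $C\int|u_3||\nabla u||\Delta u|\,dx$, which your scheme can handle, but another piece is genuinely of the form $C\int|\nabla_h u||\nabla u|^2\,dx$. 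This term cannot be estimated by "placing $\partial_j u_k$ in $L^\alpha$, the component in $L^q$, and the rest in $\|\nabla u\|_2$ and $\|\Delta u\|_2$": the only such bound is $\|\nabla_h u\|_2\|\nabla u\|_4^2\lesssim \|\nabla u\|_2^{3/2}\|\Delta u\|_2^{3/2}$, and absorbing $\|\Delta u\|_2^{3/2}$ by Young leaves $\|\nabla u\|_2^6$, i.e.\ exactly the supercritical term that defeats the naive energy method. So as written your estimate does not collapse to $\frac{d}{dt}\|\nabla u\|_2^2+\nu\|\Delta u\|_2^2\le C(1+\|\partial_j u_k\|_\alpha^\beta)\|\nabla u\|_2^2$.

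The missing ingredient is a separate evolution inequality for the horizontal gradient: one must also test the equation with $-\Delta_h u$ and derive
\[
\|\nabla_h u\|_2^2+\nu\int_0^t\|\nabla_h\nabla u\|_2^2\,d\tau
\le \|\nabla_h u(0)\|_2^2
+C\Bigl[\int_0^t\|u_3\|_q^{\frac{2(r-1)}{r-2}}\|\partial_j u_k\|_\alpha^{\frac{2}{r-2}}\|\nabla u\|_2^2\,d\tau\Bigr]^{\frac{r-2}{r-1}}\Bigl[\int_0^t\|\Delta u\|_2^2\,d\tau\Bigr]^{\frac{1}{r-1}},
\]
obtained via the anisotropic trick $\max_{x_1}|u_3|^r\le r\int|u_3|^{r-1}|\partial_1 u_3|\,dx_1$ together with the Troisi inequality. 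One then estimates $\int|\nabla_h u||\nabla u|^2\,dx\le C\|\nabla_h u\|_2\|\nabla u\|_2^{1/2}\|\nabla_h\nabla u\|_2\|\Delta u\|_2^{1/2}$, takes the supremum in time of $\|\nabla_h u\|_2$ and H\"older in time, and only then does the fractional power $\bigl(\int_0^t\|\Delta u\|_2^2\,d\tau\bigr)^{\frac{1}{r-1}+\frac14}$ become subcritical (for $3r-7>0$) and absorbable. This auxiliary $\nabla_h u$ estimate is where the single entry $\partial_j u_k$ really earns its keep, and it is also where the quadratics whose roots give $f(\alpha)$ and $g(\alpha)$ arise — from matching the exponent $\frac{8}{3r-7}$ (resp.\ $\frac{8}{3(r-2)}$) against $\beta=\frac{2\sigma}{9-8\sigma}$ (resp.\ $\frac{2\mu}{3-\mu}$) coming from the $L^q$ bound on $u_3$. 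Finally, your heuristic that the case $j=k$ is wider because incompressibility trades $\partial_k u_k$ for the other diagonal entries is not the mechanism used: the gain there comes from integrating the pressure term by parts so that $\partial_3 u_3$ appears directly against $\|p\|_\mu\le C\|u\|_{2\mu}^2$, avoiding the anisotropic Sobolev step in the preliminary estimate.
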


\begin{rem}\label{r1.5} Theorem \ref{t1.1} give us an
endpoint version of regularity criterion, which is a complement of
\cite{[2]}. From the proof of it, one can know this result in fact
should have been included in \cite{[2]}.   Compared
with the results of \cite{[2]}, it is easy to
check that Theorem \ref{t1.3} (i) is an improvement of \eqref{c}
(see {{Figure 1}}). For Theorem \ref{t1.3} (ii), the allowed region of $(\alpha, \beta)$ in our rsult is much
large than those of \cite{[26]} and \cite{[2]} (see {Figure 2}).
\end{rem}
\par For the convenience, we recall the following version of
the three-dimensional  Sobolev and Ladyzhenskaya inequalities in the
whole space $\mathbb{R}^{3}$ (see, for example, \cite{[5]}
-\cite{[11]}). There exists a positive constant $C$ such that
\begin{equation}\label{i}
\begin{array}{ll}
 \displaystyle
\|u\|_{r}&\displaystyle
\leq C \|u\|_{2}^{\frac{6-r}{2r}}\|\partial_{1}u\|_{2}^{\frac{r-2}{2r}}\|\partial_{2}u\|_{2}^{\frac{r-2}{2r}}\|\partial_{3}u\|_{2}^{\frac{r-2}{2r}}\\
&\leq C \|u\|_{2}^{\frac{6-r}{2r}}\|\nabla
u\|_{2}^{\frac{3(r-2)}{2r}},
\end{array}
\end{equation}
for every $u\in H^{1}(\mathbb{R}^{3})$ and every $r\in[2,6]$, where
$C$ is a constant depending only on $r$. Taking $\nabla$div on both
sides of \eqref{a} for smooth $(u, p)$, one can obtain
$$
-\Delta(\nabla
p)=\sum_{i,j}^{3}\partial_{i}\partial_{j}(\nabla(u_{i}u_{j})),
$$
therefore, the Calderon-Zygmund inequality in $\mathbb{R}^{3}$ (see
\cite{[30]})
\begin{equation}\label{j}
\|\nabla p\|_{q}\leq C\||\nabla u||u|\|_{q},
 1<q<\infty,
\end{equation}
holds, where $C$ is a positive constant depending only on $q$. And
there  is another  estimate for  pressure
\begin{equation}\label{p} \| p\|_{q}\leq C\|u\|_{2q}^2, \
 1<q<\infty.
\end{equation}
 Let
$$ 1\leq q_{i}<\infty,\  i=1,...,n.
$$
Then,  for all $u\in C^{\infty}_{0}(\mathbb{R}^{n})$ the following
Troisi inequality holds (see \cite{[8]}):
\begin{equation}\label{k}
\|u\|_{s}\leq
C\prod_{i=1}^{n}\|D_{i}u\|_{q_{i}}^{\frac{1}{n}},\end{equation}
where $q_{i}, n$ and $s$ satisfy
\begin{equation*}\label{b}
\sum_{i=1}^{n}q_{i}^{-1}>1, \ \mbox{and}\
s=\frac{n}{\sum_{i=1}^{n}q_{i}^{-1}-1}.\end{equation*}

\section{ \textit{A Priori} Estimates}
In this section, under the assumptions of Theorem \ref{t1.1},
\ref{t1.3}, we will prove some \textit{a priori} estimates, which
are needed  in the proof of our results. First of all, we
note that the Leray-Hopf weak solutions have the energy inequality
(see, for example, \cite{[21]}, \cite{[22]},
 \cite{[5]} for detail)
\begin{equation}\label{1}
\|u(\cdot,t)\|_{L^{2}}^{2}+2\nu\int_{0}^{t}\|\nabla
u(\cdot,s)\|_{L^{2}}^{2}ds\leq K_{1},\end{equation} for all $0<
t<T,$ where $K_{1}=\|u_{0}\|_{L^{2}}^{2}.$
\par Then,  an estimate of $ \|u_{3}\|_{q}$  can be read in
the following lemma:
\begin{lem}\label{l2.1}
Assume that
\begin{equation}\label{2}
3\leq{{\alpha}}<\infty,\  \ 1<\sigma\leq \frac{9}{8}, \ \mbox{and}\
q\geq2,\end{equation} where $q, {\alpha},$ and $ \sigma$ satisfy
\begin{equation}\label{3}
\frac{1}{\sigma}+\frac{q-2}{q}+\frac{1}{3{\alpha}}=1,\end{equation}
then we have the following estimates
\begin{equation}\label{4}
\frac{1}{2}\frac{d}{dt}\|u_{3}\|_{q}^{2}\leq C \|\nabla
u\|_{2}^{\frac{8}{3}-s}\|\partial_{1}u_{3}\|_{{\alpha}}^{{1}/{3}}, \
\mbox{with}\ \ s=\frac{3-2\sigma}{\sigma}.
\end{equation}
\end{lem}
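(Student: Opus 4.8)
The plan is to run an $L^q$ energy estimate on the third component $u_3$, reduce everything to a single pressure integral, and then control that integral by a three--factor H\"older inequality together with Troisi's inequality \eqref{k} and the Calderon-Zygmund and Ladyzhenskaya inequalities \eqref{j}, \eqref{i}. First I would take the third scalar equation of \eqref{a}, multiply it by $|u_3|^{q-2}u_3$ and integrate over $\mathbb{R}^3$. Because $\nabla\cdot u=0$, the convective term $\int (u\cdot\nabla)u_3\,|u_3|^{q-2}u_3\,dx=\tfrac1q\int u\cdot\nabla|u_3|^q\,dx$ vanishes, and the viscous contribution is nonnegative and may be discarded. This leaves
$$\frac{1}{q}\frac{d}{dt}\|u_3\|_q^q\le\Big|\int_{\mathbb{R}^3}\partial_3 p\,|u_3|^{q-2}u_3\,dx\Big|=:|I|.$$
The entire task is then to bound $|I|$ by the desired right--hand side times $\|u_3\|_q^{q-2}$, since $\tfrac12\frac{d}{dt}\|u_3\|_q^2=\tfrac1q\|u_3\|_q^{2-q}\frac{d}{dt}\|u_3\|_q^q$, so that the factor $\|u_3\|_q^{q-2}$ will cancel against $\|u_3\|_q^{2-q}$ at the very end and produce exactly \eqref{4}.

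Next I would estimate $|I|\le\int|\partial_3 p|\,|u_3|^{q-2}\,|u_3|\,dx$ by H\"older's inequality with the three exponents $\sigma$, $q/(q-2)$ and $3\alpha$. These are admissible precisely because of the balance \eqref{3}, and they give
$$|I|\le\|\partial_3 p\|_\sigma\,\|u_3\|_q^{q-2}\,\|u_3\|_{3\alpha}.$$
This is the step in which the hypotheses are tuned: the middle factor reproduces the wanted $\|u_3\|_q^{q-2}$, while the last factor $\|u_3\|_{3\alpha}$ is exactly what Troisi's inequality can turn into a one--third power of $\|\partial_1 u_3\|_\alpha$. Applying \eqref{k} with $(q_1,q_2,q_3)=(\alpha,2,2)$, so that its exponent is $3\alpha$, yields
$$\|u_3\|_{3\alpha}\le C\|\partial_1 u_3\|_\alpha^{1/3}\|\partial_2 u_3\|_2^{1/3}\|\partial_3 u_3\|_2^{1/3}\le C\|\partial_1 u_3\|_\alpha^{1/3}\|\nabla u\|_2^{2/3}.$$

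It then remains to control $\|\partial_3 p\|_\sigma\le\|\nabla p\|_\sigma$. By Calderon-Zygmund \eqref{j} and H\"older, $\|\nabla p\|_\sigma\le C\,\||\nabla u|\,|u|\|_\sigma\le C\|\nabla u\|_2\,\|u\|_b$ with $\tfrac1b=\tfrac1\sigma-\tfrac12$, i.e. $b=\tfrac{2\sigma}{2-\sigma}$. The hypotheses $1<\sigma\le\tfrac98$ in \eqref{2} are exactly what guarantee $b\in[2,6]$, so the Ladyzhenskaya inequality \eqref{i} applies and a short computation gives $\|u\|_b\le C\|u\|_2^{s}\|\nabla u\|_2^{3(\sigma-1)/\sigma}$, where the interpolation exponent $\tfrac{6-b}{2b}$ equals precisely $s=\tfrac{3-2\sigma}{\sigma}$. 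Collecting the powers of $\|\nabla u\|_2$ coming from the pressure bound, namely $1+3(\sigma-1)/\sigma$, and from Troisi, namely $2/3$, produces $\tfrac{14}{3}-\tfrac3\sigma=\tfrac83-s$; the energy bound \eqref{1} absorbs $\|u\|_2^{s}$ into the constant, and the cancellation of $\|u_3\|_q^{q-2}$ against $\|u_3\|_q^{2-q}$ finishes the proof. I expect the only delicate point to be the bookkeeping of exponents: one must verify that the choices forced by \eqref{3} and by the requirement $b\in[2,6]$ leave exactly the power $\tfrac83-s$ on $\|\nabla u\|_2$, which is where the specific form $s=\tfrac{3-2\sigma}{\sigma}$ and the admissible range of $\sigma$ are genuinely used.
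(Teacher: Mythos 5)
Your proposal is correct and follows essentially the same route as the paper's own proof: the $L^q$ energy estimate on $u_3$, the three-factor H\"older split $\|\partial_3 p\|_\sigma\|u_3\|_q^{q-2}\|u_3\|_{3\alpha}$ dictated by \eqref{3}, Troisi's inequality with exponents $(\alpha,2,2)$, the Calderon--Zygmund bound \eqref{j} followed by the Ladyzhenskaya interpolation with exponent $s=\frac{3-2\sigma}{\sigma}$, and the energy inequality \eqref{1} to absorb $\|u\|_2^s$. The exponent bookkeeping ($1+(1-s)+\tfrac23=\tfrac83-s$) matches the paper exactly.
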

\begin{proof}
 We use $|u_{3}|^{q-2}u_{3},$ $ q\geq2,$ as test function  in
the equation \eqref{a} for $u_{3}.$ By using of Gagliardo-Nirenberg,
H$\ddot{\mbox{o}}$lder's inequalities, \eqref{j} and \eqref{k}, we
have
\begin{equation}\label{5}
\begin{array}{ll}
 \displaystyle \frac{1}{q}\frac{d}{dt}\|u_{3}\|_{q}^{q}&+C(q)\nu\|\nabla|u_{3}|^{\frac{q}{2}}\|_{2}^{2}\\
 &=\displaystyle
 -\int_{\mathbb{R}^{3}}\partial_{3}p|u_{3}|^{q-2}u_{3}dx\displaystyle  \vspace{2mm}\\
 \displaystyle &\leq\displaystyle
  \|\partial_{3}p\|_{\sigma}\|u_{3}\|_{q}^{q-2}\|u_{3}\|_{3{\alpha}}\displaystyle \vspace{2mm}\\
 \displaystyle &\leq\displaystyle   C \||\nabla u| |u|
 \|_{\sigma}\|u_{3}\|_{q}^{q-2}\|u_{3}\|_{3\alpha}\displaystyle
 \ (\mbox{by }\eqref{j})\vspace{2mm}\\
 \displaystyle &\leq\displaystyle C\|\nabla u\|_{2}\| u\|_{\frac{2\sigma}{2-\sigma}}
 \|u_{3}\|_{q}^{q-2}\|\partial_{1}u_{3}\|_{\alpha}^{{1}/{3}}
 \|\partial_{2}u_{3}\|_{2}^{{1}/{3}}\|\partial_{3}u_{3}\|_{2}^{{1}/{3}}
 \vspace{2mm}\\
\displaystyle &\leq\displaystyle C\|\nabla u\|_{2}\|
u\|_{2}^{s}\|\nabla u\|_{2}^{1-s}
 \|u_{3}\|_{q}^{q-2}\|\partial_{1}u_{3}\|_{\alpha}^{{1}/{3}}
 \|\nabla u\|_{2}^{{2}/{3}}\vspace{2mm}\\
\displaystyle &=\displaystyle C\| u\|_{2}^{s}\|\nabla
u\|_{2}^{\frac{8}{3}-s}\|u_{3}\|_{q}^{q-2}\|\partial_{1}u_{3}\|_{\alpha}^{{1}/{3}}.
\end{array}
\end{equation}
For the index in above inequality, we know
$1<\sigma\leq\frac{9}{8}<\frac{3}{2}$,
$$
2<\frac{2\sigma}{2-\sigma}< 6,
$$
 and $s$ satisfies
$$
\frac{2-\sigma}{2\sigma}=\frac{s}{2}+ \frac{1-s}{6}, \
\mbox{namely,}\ \ s=\frac{3-2\sigma}{\sigma},\ \  by \ \eqref{2}.
$$
In view of \eqref{1}, \eqref{5} implies that
$$
\frac{1}{2}\frac{d}{dt}\|u_{3}\|_{q}^{2}\leq C \|\nabla
u\|_{2}^{\frac{8}{3}-s}\|\partial_{1}u_{3}\|_{\alpha}^{{1}/{3}}.
$$The proof is thus completed.
\end{proof}
\par Next, we estimate \ $ \|\nabla_{h}u\|_{2}$:
\begin{lem}\label{l2.2}
Assume that ${\alpha}$ and $q$  satisfy the conditions in Lemma
\ref{l2.1}. Set
\begin{equation}\label{6}
r=\frac{(q+1)\alpha-q}{\alpha},
\end{equation}
then we have the following estimates
\begin{equation}\label{7}
\begin{array}{ll}
 \displaystyle &\|\nabla_{h}u\|_{2}^{2}+\displaystyle\nu\int_{0}^{t}\|\nabla_{h}\nabla
u\|_{2}^{2}d\tau
 \displaystyle\leq
\displaystyle\|\nabla_{h} u(0)\|_{2}^{2}\\& \ \ \ \ \ \ \ \ \ \
+C\displaystyle[\int_{0}^{t}\|u_{3}\|_{q}^{\frac{2(r-1)}{r-2}}\|
\partial_{1}u_{3}\|_{\alpha}^{\frac{2}{r-2}}\|\nabla
u\|_{2}^{2}d\tau]^{\frac{r-2}{r-1}}\times
 [\int_{0}^{t}\|\Delta u\|_{2}^{2}d\tau]^{\frac{1}{r-1}},\vspace{2mm}\\
 \end{array}\end{equation}
and
\begin{equation}\label{100}
\begin{array}{ll}
&\|\nabla_{h}u\|_{2}^{2}+\displaystyle\nu\int_{0}^{t}\|\nabla_{h}\nabla
u\|_{2}^{2}d\tau\\
&\ \ \ \ \ \ \ \ \leq \displaystyle\|\nabla_{h}
u(0)\|_{2}^{2}+C\displaystyle\int_{0}^{t}\|u_{3}\|_{q}^{\frac{2(r-1)}{r-2}}\|
\partial_{3}u_{3}\|_{\alpha}^{\frac{2}{r-2}}\|\nabla
u\|_{2}^{2}d\tau.
\end{array}\end{equation}
\end{lem}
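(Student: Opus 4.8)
The plan is to obtain a differential inequality for $\|\nabla_h u\|_2^2$ by using $-\Delta_h u$ as a test function in \eqref{a}, and then to bound the convective term by the quantities on the right-hand sides of \eqref{7} and \eqref{100}. Multiplying the momentum equation by $-\Delta_h u$, integrating over $\mathbb{R}^3$ and summing over the components, the time derivative gives $\tfrac12\frac{d}{dt}\|\nabla_h u\|_2^2$ and the viscous term becomes $\nu\|\nabla_h\nabla u\|_2^2$, since $\int_{\mathbb{R}^3}\Delta u\cdot\Delta_h u\,dx=\|\nabla_h\nabla u\|_2^2$ (read off at once on the Fourier side). The pressure term disappears: integrating by parts and using $\nabla\cdot(\Delta_h u)=\Delta_h(\nabla\cdot u)=0$ yields $\int_{\mathbb{R}^3}\nabla p\cdot\Delta_h u\,dx=0$. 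Hence the whole problem reduces to estimating
\[
I=\sum_{i,k=1}^{3}\int_{\mathbb{R}^3}u_i\,\partial_i u_k\,\Delta_h u_k\,dx .
\]

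First I would integrate by parts once in the horizontal variables to transfer one derivative from $\Delta_h u_k=\sum_{l=1}^2\partial_l^2 u_k$. The resulting symmetric piece $\tfrac12\sum_{i,l}\int_{\mathbb{R}^3} u_i\,\partial_i|\partial_l u|^2\,dx$ vanishes after a further integration by parts thanks to $\nabla\cdot u=0$, so that
\[
I=-\sum_{l=1}^{2}\sum_{i,k=1}^{3}\int_{\mathbb{R}^3}\partial_l u_i\,\partial_i u_k\,\partial_l u_k\,dx .
\]
The crux of the proof, and the step I expect to be the main obstacle, is to regroup this trilinear expression, with the help of the incompressibility relation $\partial_3u_3=-\partial_1u_1-\partial_2u_2$ and additional integrations by parts, so that a single distinguished gradient component is the only ``dangerous'' factor. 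For the off-diagonal grouping this component can be taken to be $\partial_1 u_3$ (the case $j\ne k$ of Theorem \ref{t1.3}, leading to \eqref{7}); for the diagonal grouping it is $\partial_3 u_3$ (the case $j=k$, leading to \eqref{100}). All remaining factors are then either $\nabla u$ or a second-order quantity.

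Next I would estimate the individual terms by H\"older's inequality, arranging the split so that one factor is measured by $\|u_3\|_q$, the distinguished derivative is treated by the Troisi inequality \eqref{k} exactly as in the proof of Lemma \ref{l2.1} (this produces the $L^\alpha$ norm of $\partial_1 u_3$, respectively $\partial_3 u_3$, together with fractional powers), and the remaining factors are interpolated between $L^2$ and the second-order norm by the Ladyzhenskaya--Gagliardo--Nirenberg inequality \eqref{i}. The exponent $r$ defined in \eqref{6} is chosen precisely so that the resulting powers balance and H\"older's inequality closes. An application of Young's inequality then absorbs the term $\nu\|\nabla_h\nabla u\|_2^2$ into the left-hand side.

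Finally I would integrate in time over $(0,t)$, using the energy bound \eqref{1} to control $\|u\|_2$ and $\|\nabla u\|_2$ where needed. In the $\partial_1 u_3$ case the pointwise bound still carries a factor $\|\Delta u\|_2$ (the full Laplacian, which is not controlled by the horizontal dissipation on the left), so I would apply H\"older in the time variable with the conjugate exponents $\tfrac{r-1}{r-2}$ and $r-1$ to split off $\big[\int_0^t\|\Delta u\|_2^2\,d\tau\big]^{1/(r-1)}$, which gives the product form \eqref{7}. In the $\partial_3 u_3$ case the surviving second-order factor is the horizontal one $\|\nabla_h\nabla u\|_2$, which is entirely absorbed by Young's inequality, so a direct time integration yields the single-integral estimate \eqref{100}, completing the proof.
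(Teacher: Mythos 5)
Your reduction of the convective term is where the argument breaks down. After integrating by parts down to the first-order trilinear form $-\sum_{l=1}^{2}\sum_{i,k}\int\partial_l u_i\,\partial_i u_k\,\partial_l u_k\,dx$, you claim it can be regrouped so that $\partial_1 u_3$ (resp.\ $\partial_3 u_3$) is the only dangerous factor. This is not possible: the mixed terms with $i=3$ or $k=3$ produce both $\partial_1 u_3$ and $\partial_2 u_3$, and the purely horizontal block $\sum_{i,k,l\le 2}$ reduces, via $\partial_1u_1+\partial_2u_2=-\partial_3u_3$, to terms carrying $\partial_3 u_3$ --- never $\partial_1 u_3$. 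The paper does something structurally different (quoting Cao--Titi): it does \emph{not} integrate by parts all the way to first derivatives, but instead bounds $\int[(u\cdot\nabla)u]\Delta_h u\,dx\le C\int|u_3||\nabla u||\nabla_h\nabla u|\,dx$, keeping $u_3$ \emph{undifferentiated} and retaining one second-order factor. The distinguished derivative $\partial_1 u_3$ (or $\partial_3 u_3$) then enters only through the one-dimensional estimate $\max_{x_1}|u_3|^r\le r\int_{\mathbb{R}}|u_3|^{r-1}|\partial_1 u_3|\,dx_1$ applied after an anisotropic H\"older decomposition; the H\"older step with $\frac{q}{q-r+1}=\alpha$ is exactly what forces the relation \eqref{6} and yields the powers $\|u_3\|_q^{(r-1)/r}\|\partial_1u_3\|_\alpha^{1/r}$. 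Your suggestion to use the Troisi inequality \eqref{k} as in Lemma \ref{l2.1} would assign each derivative the fixed power $1/3$ and cannot produce these $r$-dependent exponents.

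The remaining factor $\|\nabla u\|$ is handled in the paper by a Ladyzhenskaya-type interpolation in the two variables transverse to the chosen direction, and this is the true source of the asymmetry between \eqref{7} and \eqref{100}: choosing $x_1$ leaves the second-order factors $\partial_2\nabla u$ and $\partial_3\nabla u$, one of which is vertical, so after Young's inequality a factor $\|\Delta u\|_2^{2/(r-1)}$ survives and must be split off by H\"older in time with exponents $\frac{r-1}{r-2}$ and $r-1$ (your final step here is correct); choosing $x_3$ leaves only horizontal second derivatives, which are fully absorbed into $\nu\|\nabla_h\nabla u\|_2^2$, giving the single time integral in \eqref{100}. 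So your end-game matches the paper, but the core reduction of the nonlinearity and the mechanism generating $\|\partial_j u_3\|_\alpha$ both need to be replaced.
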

\begin{proof}
Taking the inner product of the equation \eqref{a} with
$-\Delta_{h}u$ in $L^{2}$,  applying H$\ddot{\mbox{o}}$lder's
inequality several times, we obtain
\begin{equation}\label{8}
\begin{array}{ll}
 &\displaystyle \frac{1}{2}\frac{d}{dt}\|\nabla_{h}u\|_{2}^{2}+\nu\|\nabla_{h}\nabla u\|_{2}^{2}\\
 &\ \ \ \ =\displaystyle
 \int_{\mathbb{R}^{3}}[(u\cdot \nabla)u]\Delta_{h}u dx\displaystyle \\
 \displaystyle &\ \ \ \ \leq\displaystyle
 C\int_{\mathbb{R}^{3}}|u_{3}||\nabla u||\nabla_{h}\nabla u|dx \ \ (\mbox{see}\  \mbox{\cite{[2]}})\displaystyle \\
 \displaystyle &\ \ \ \ \leq\displaystyle
 C\int_{\mathbb{R}^{2}}\max_{x_{1}}|u_{3}|(\int_{\mathbb{R}}|\nabla u|^{2}dx_{1})^{\frac{1}{2}}(\int_{\mathbb{R}}|\nabla_{h}\nabla u|^{2}dx_{1})^{\frac{1}{2}} dx_{2}dx_{3}\
 \displaystyle \vspace{2mm}\\
 \displaystyle &\ \ \ \ \leq\displaystyle
 C[\int_{\mathbb{R}^{2}}(\max_{x_{1}}|u_{3}|)^{r}dx_{2}dx_{3}]^{\frac{1}{r}}[\int_{\mathbb{R}^{2}}(\int_{\mathbb{R}}|\nabla
 u|^{2}dx_{1})^{\frac{r}{r-2}}dx_{2}dx_{3}]^{\frac{r-2}{2r}}\\
 \displaystyle& \ \ \ \ \ \ \times \displaystyle[\int_{\mathbb{R}^{3}}|\nabla_{h}\nabla
 u|^{2}dx_{1}dx_{2}dx_{3}]^{\frac{1}{2}}\\
 \displaystyle \vspace{2mm}
 \displaystyle &\ \ \ \ \leq\displaystyle
 C[\int_{\mathbb{R}^{3}}|u_{3}|^{r-1}\partial_{1}u_{3}dx_{1}dx_{2}dx_{3}]^{\frac{1}{r}}\|\nabla_{h}\nabla u\|_{2}
  \displaystyle \\
 \displaystyle& \ \ \ \ \ \ \times \displaystyle[\int_{\mathbb{R}}(\int_{\mathbb{R}^{2}}|\nabla
 u|^{\frac{2r}{r-2}}dx_{2}dx_{3})^{\frac{r-2}{r}}dx_{1}]^{\frac{1}{2}}
 \displaystyle \vspace{2mm}\\
\displaystyle &\ \ \ \ \ \leq \displaystyle C\displaystyle\|
u_{3}\|_{q}^{\frac{r-1}{r}}\|
\partial_{1}u_{3}\|_{\frac{q}{q-r+1}}^{\frac{1}{r}}\|\nabla u\|_{2}^{\frac{r-2}{r}}
 \|\partial_{2}\nabla u\|_{2}^{\frac{1}{r}} \|\partial_{3}\nabla u\|_{2}^{\frac{1}{r}}
\|\nabla_{h}\nabla u\|_{2}.\vspace{2mm}\\
\end{array}\end{equation}
 By \eqref{6}, and the fact $\alpha\geq 3, $ $q\geq2$, we have
\begin{equation}\label{9}
  \frac{q}{q-r+1}=\alpha,\ \mbox{and}\ \frac{7}{3}\leq r<q+1.
\end{equation}
 To prove \eqref{7}, applying  Young's inequality to \eqref{8}, we obtain
$$
\begin{array}{ll}
 &\displaystyle \frac{1}{2}\frac{d}{dt}\|\nabla_{h}u\|_{2}^{2}+\nu\|\nabla_{h}\nabla u\|_{2}^{2}\vspace{2mm}\\
\displaystyle &\hspace{1cm}\leq C\displaystyle\| u_{3}\|_{q}^{2}\|
\partial_{1}u_{3}\|_{\alpha}^{\frac{2}{r-1}}\|\nabla u\|_{2}^{\frac{2(r-2)}{r-1}}
 \|\Delta u\|_{2}^{\frac{2}{r-1}} +\frac{\nu}{2}
\|\nabla_{h}\nabla u\|_{2}^{2}.\vspace{2mm}\\
\end{array}
$$
Absorbing the last term in right hand side and integrating the above
inequality, using H$\ddot{\mbox{o}}$lder's inequality, we have
$$
\begin{array}{ll}
 \displaystyle &\|\nabla_{h}u\|_{2}^{2}+\displaystyle\nu\int_{0}^{t}\|\nabla_{h}\nabla
u\|_{2}^{2}d\tau\vspace{2mm}\\
 \displaystyle &\ \ \ \ \leq
\displaystyle\|\nabla_{h}
u(0)\|_{2}^{2}+C\displaystyle\int_{0}^{t}\|u_{3}\|_{q}^{2}\|
\partial_{1}u_{3}\|_{\alpha}^{\frac{2}{r-1}}\|\nabla u\|_{2}^{\frac{2(r-2)}{r-1}}
 \|\Delta u\|_{2}^{\frac{2}{r-1}}d\tau\vspace{2mm}\\
 \displaystyle &\ \ \ \ \leq
\displaystyle\|\nabla_{h}
u(0)\|_{2}^{2}+C\displaystyle[\int_{0}^{t}\|u_{3}\|_{q}^{\frac{2(r-1)}{r-2}}\|
\partial_{1}u_{3}\|_{\alpha}^{\frac{2}{r-2}}\|\nabla
u\|_{2}^{2}d\tau]^{\frac{r-2}{r-1}}\times
 [\int_{0}^{t}\|\Delta u\|_{2}^{2}d\tau]^{\frac{1}{r-1}}.\vspace{2mm}\\
 \end{array}
$$
To prove \eqref{100}, firstly,  we note that we can get a similar
inequality to \eqref{8} as follows
\begin{equation}\label{99}
\begin{array}{ll}
 &\displaystyle \frac{1}{2}\frac{d}{dt}\|\nabla_{h}u\|_{2}^{2}+\nu\|\nabla_{h}\nabla u\|_{2}^{2}\\
\displaystyle &\ \ \ \ \ \leq \displaystyle C\displaystyle\|
u_{3}\|_{q}^{\frac{r-1}{r}}\|
\partial_{3}u_{3}\|_{\alpha}^{\frac{1}{r}}\|\nabla u\|_{2}^{\frac{r-2}{r}}
 \|\partial_{2}\nabla u\|_{2}^{\frac{1}{r}} \|\partial_{2}\nabla u\|_{2}^{\frac{1}{r}}
\|\nabla_{h}\nabla u\|_{2}.\vspace{2mm}\\
\end{array}\end{equation}
Applying Young's inequality to \eqref{99}, we have
\begin{equation}\label{98}
\begin{array}{ll}
 &\displaystyle \frac{1}{2}\frac{d}{dt}\|\nabla_{h}u\|_{2}^{2}+\nu\|\nabla_{h}\nabla u\|_{2}^{2}\\
\displaystyle &\ \ \ \ \ \leq \displaystyle C\displaystyle\|
u_{3}\|_{q}^{\frac{2(r-1)}{r-2}}\|
\partial_{3}u_{3}\|_{\alpha}^{\frac{2}{r-2}}\|\nabla u\|_{2}^{2}+\frac{\nu}{2}\|\nabla_{h}\nabla u\|_{2}^{2}.\vspace{2mm}\\
\end{array}\end{equation}
As above, absorbing the last term in right hand side of \eqref{98} and
integrating the above inequality, using H$\ddot{\mbox{o}}$lder's
inequality, we obtain
\begin{equation*}
\begin{array}{ll}
\|\nabla_{h}u\|_{2}^{2}+\displaystyle\nu\int_{0}^{t}\|\nabla_{h}\nabla
u\|_{2}^{2}d\tau\leq \displaystyle\|\nabla_{h}
u(0)\|_{2}^{2}+C\displaystyle\int_{0}^{t}\|u_{3}\|_{q}^{\frac{2(r-1)}{r-2}}\|
\partial_{3}u_{3}\|_{\alpha}^{\frac{2}{r-2}}\|\nabla
u\|_{2}^{2}d\tau
\end{array}\end{equation*}
The proof of Lemma \ref{l2.2} is completed.
\end{proof}
\par At last, we estimate \ $ \|\nabla u\|_{2} :$
\begin{lem}\label{l2.3} Let
${\alpha}$, $q$ and $r$ satisfy Lemma \ref{l2.1} and Lemma
\ref{l2.2}, then we have the
following estimates:\\
(i),
\begin{equation}\label{10} \begin{array}{ll}
&\|\nabla
u\|_{2}^{2}+\displaystyle{\nu}\displaystyle\int_{0}^{t}\|\Delta
u\|_{2}^{2}d\tau\\&\hspace{1cm}\leq\displaystyle\displaystyle
C\int_{0}^{t}\|u_{3}\|_{q}^{\frac{8(r-1)}{3r-7}}\|
\partial_{1}u_{3}\|_{\alpha}^{\frac{8}{3r-7}}\|\nabla
u\|_{2}^{2}d\tau+\|\nabla u(0)\|_{2}^{2}\\
&\hspace{1cm}\hspace{0.5cm}+\displaystyle
C\int_{0}^{t}\|u_{3}\|_{q}^{\frac{2(r-1)}{r-2}}\|
\partial_{1}u_{3}\|_{\alpha}^{\frac{2}{r-2}}\|\nabla
u\|_{2}^{2}d\tau+C
\end{array}\end{equation}if $3r-7>0$.\\
(ii),
\begin{equation}\label{11} \begin{array}{ll}
&\|\nabla
u\|_{2}^{2}+\displaystyle\frac{5\nu}{4}\displaystyle\int_{0}^{t}\|\Delta
u\|_{2}^{2}d\tau\\
\displaystyle &\hspace{1cm} \leq\|\nabla
u(0)\|_{2}^{2}+\displaystyle C\int_{0}^{t}\|u_{3}\|_{q}^{8}\|
\partial_{1}u_{3}\|_{\alpha}^{6}\|\nabla
u\|_{2}^{2}d\tau+ C\\ &\hspace{1cm}\hspace{0.5cm}
+C\displaystyle[\int_{0}^{t}\|u_{3}\|_{q}^{8}\|
\partial_{1}u_{3}\|_{\alpha}^{6}\|\nabla
u\|_{2}^{2}d\tau]^{\frac{1}{4}}\times
 \int_{0}^{t}\|\Delta
 u\|_{2}^{2}d\tau
\end{array}\end{equation}if $3r-7=0$.
Moreover, we have
\begin{equation} \label{c95}\begin{array}{ll}
&\|\nabla
u\|_{2}^{2}+\displaystyle\frac{\nu}{2}\displaystyle\int_{0}^{t}\|\Delta
u\|_{2}^{2}d\tau\\
&\ \ \ \ \ \  \leq\|\nabla u(0)\|_{2}^{2}+\displaystyle
C\int_{0}^{t}\| u_{3}\|_{q}^{\frac{2(r-1)}{r-2}}\|
\partial_{3}u_{3}\|_{\alpha}^{\frac{2}{r-2}}\|\nabla
u\|_{2}^{2}d\tau\\
&\hspace{1.3cm} +C\displaystyle\int_{0}^{t}\|
u_{3}\|_{q}^{\frac{8(r-1)}{3(r-2)}}\|
\partial_{3}u_{3}\|_{\alpha}^{\frac{8}{3(r-2)}}\|\nabla
u\|_{2}^{2}d\tau+C
\end{array}\end{equation}
\end{lem}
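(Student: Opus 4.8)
The plan is to test the momentum equation in \eqref{a} against $-\Delta u$ in $L^2(\mathbb{R}^3)$, in direct parallel with the test against $-\Delta_h u$ used for Lemma \ref{l2.2}. Since $\nabla\cdot(\Delta u)=\Delta(\nabla\cdot u)=0$, the pressure gradient is $L^2$-orthogonal to $\Delta u$ and drops out, while the viscous term produces $\nu\|\Delta u\|_2^2$. This yields the identity $\tfrac12\frac{d}{dt}\|\nabla u\|_2^2+\nu\|\Delta u\|_2^2=\int_{\mathbb{R}^3}(u\cdot\nabla)u\cdot\Delta u\,dx$, so all three estimates reduce to controlling the single nonlinear term $I:=\int_{\mathbb{R}^3}(u\cdot\nabla)u\cdot\Delta u\,dx$.

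First I would reduce $I$ by the incompressibility manipulation of \cite{[2]} already invoked for \eqref{8}: integrating by parts and using $\nabla\cdot u=0$ to trade the diagonal derivatives $\partial_1u_1+\partial_2u_2$ for $-\partial_3u_3$, one obtains a bound $|I|\le C\int_{\mathbb{R}^3}|u_3|\,|\nabla u|\,|\nabla^2u|\,dx$ in which a factor of $u_3$ always survives (for part (ii) and for \eqref{c95} the reduction is arranged so that $\partial_3u_3$ is the surviving diagonal entry). I then split the second–order factor into horizontal and vertical pieces, $|\nabla^2u|\lesssim|\nabla_h\nabla u|+|\partial_3\nabla u|$, and treat each resulting integral by the anisotropic Hölder–Troisi chain of \eqref{8}–\eqref{k}: the one–dimensional maximum trick turns $|u_3|$ into $\|u_3\|_q^{(r-1)/r}\|\partial_1u_3\|_\alpha^{1/r}$ (using $q/(q-r+1)=\alpha$ from \eqref{9}), and the remaining $|\nabla u|$ is interpolated against the second derivatives. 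Bounding $\|\nabla_h\nabla u\|_2$ and $\|\partial_3\nabla u\|_2$ by $\|\Delta u\|_2$ (via $\|\partial_i\partial_ju\|_2\le\|\Delta u\|_2$) produces two schematic products of the form $\|u_3\|_q^{(r-1)/r}\|\partial_1u_3\|_\alpha^{1/r}\|\nabla u\|_2^{\theta}\|\Delta u\|_2^{2-\theta}$, with two different interpolation exponents $\theta$ coming from the horizontal and the vertical pieces.

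The heart of the matter is then Young's inequality applied to each product so as to absorb $\|\Delta u\|_2^2$ into $\nu\|\Delta u\|_2^2$. The horizontal product carries $\|\Delta u\|_2$ to the power $(r+2)/r<2$ (admissible since $r\ge 7/3>2$ by \eqref{9}), and Young returns the integrand $\|u_3\|_q^{2(r-1)/(r-2)}\|\partial_1u_3\|_\alpha^{2/(r-2)}\|\nabla u\|_2^2$ already met in \eqref{7}. The vertical product carries $\|\Delta u\|_2$ to the power $(5r+7)/(4r)$, which is $<2$ \emph{precisely} when $3r-7>0$; in that regime Young produces the term with exponents $8(r-1)/(3r-7)$ and $8/(3r-7)$, and this is exactly where the hypothesis $3r-7>0$ of part (i) is used. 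I expect this to be the main obstacle, since one must simultaneously check the conjugate exponents and the validity of the underlying Gagliardo–Nirenberg interpolation. At the borderline $r=7/3$ the power equals $2$, absorption fails, and one must instead retain $\|\Delta u\|_2^2$ as a factor; integrating in time then yields the product form $[\int_0^t\cdots]^{1/4}\int_0^t\|\Delta u\|_2^2\,d\tau$ and the modified coefficient $5\nu/4$ of \eqref{11}.

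Finally I would integrate in time, move the $\|\Delta u\|_2^2$ contributions generated by Young's inequality to the left so that a positive multiple of $\nu\int_0^t\|\Delta u\|_2^2\,d\tau$ remains, and use the energy inequality \eqref{1} to absorb the residual lower–order contributions into the additive constant $C$, with $\|\nabla u(0)\|_2^2$ coming from the left-hand time derivative. For the diagonal case $j=k$ the identical scheme is run with $\partial_3u_3$ in place of $\partial_1u_3$, the interpolation being carried out in the $x_3$–direction (as in \eqref{99}); because the $L^\alpha$–controlled derivative now sits along the vertical direction, the Gagliardo–Nirenberg exponents shift so that the power of $\|\Delta u\|_2$ in the vertical product is $(5r+6)/(4r)<2$ for every $r>2$. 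Hence the denominators become $3(r-2)$ rather than $3r-7$, no borderline case occurs, and \eqref{c95} holds uniformly with the clean coefficient $\nu/2$.
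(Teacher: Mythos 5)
Your overall skeleton (test against $-\Delta u$, reduce the nonlinearity, anisotropic H\"older/Troisi estimates, Young's inequality to absorb $\|\Delta u\|_2^2$) matches the paper's, but there is a genuine gap at the very first reduction. For the full Laplacian test the nonlinear term does \emph{not} reduce to $C\int|u_3||\nabla u||\nabla^2 u|\,dx$. After integration by parts and use of $\partial_1u_1+\partial_2u_2=-\partial_3u_3$, the piece $I_2=\sum_{i=1}^3\sum_{j=1}^2\int u_i\partial_iu_j\partial_{33}u_j\,dx$ leaves behind purely cubic gradient terms such as $\int\partial_3u_1\,\partial_1u_1\,\partial_3u_1\,dx$ and $\int\partial_3u_2\,\partial_2u_2\,\partial_3u_2\,dx$, in which no factor of $u_3$ can be made to survive; the paper keeps these as an extra term $J_2\le C\int|\nabla_hu||\nabla u|^2\,dx$. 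This is not cosmetic: $J_2$ is estimated by $C\|\nabla_hu\|_2\|\nabla u\|_2^{1/2}\|\nabla_h\nabla u\|_2\|\Delta u\|_2^{1/2}$ and, after time integration, by $\sup_t\|\nabla_hu\|_2\,\bigl(\int\|\nabla u\|_2^2\bigr)^{1/4}\bigl(\int\|\nabla_h\nabla u\|_2^2\bigr)^{1/2}\bigl(\int\|\Delta u\|_2^2\bigr)^{1/4}$, into which the conclusion \eqref{7} of Lemma \ref{l2.2} is substituted. The resulting power of $\int_0^t\|\Delta u\|_2^2\,d\tau$ is $\frac{1}{r-1}+\frac14$, and the dichotomy $3r-7>0$ (where this exponent is $<1$, Young absorbs the term, and \eqref{10} follows) versus $3r-7=0$ (where it equals $1$, forcing the product form \eqref{11} with coefficient $5\nu/4$) comes entirely from this term. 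Your proposal never invokes Lemma \ref{l2.2} at all --- a warning sign, since the lemma's hypotheses explicitly require its setup --- and the ``vertical piece'' exponent $(5r+7)/(4r)$ you posit to recover the $3r-7$ threshold does not correspond to any estimate actually performed.

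The term you do treat, $J_1=C\int|u_3||\nabla u||\Delta u|\,dx$, is handled essentially as you describe: the one-dimensional maximum trick plus Troisi gives $\|u_3\|_q^{(r-1)/r}\|\partial_1u_3\|_\alpha^{1/r}\|\nabla u\|_2^{(r-2)/r}\|\Delta u\|_2^{(r+2)/r}$ with $(r+2)/r<2$, and Young yields the integrand $\|u_3\|_q^{2(r-1)/(r-2)}\|\partial_1u_3\|_\alpha^{2/(r-2)}\|\nabla u\|_2^2$; this part of your argument is sound and causes no borderline. Your conclusion that \eqref{c95} has no borderline case is also correct, but for the paper's reason rather than yours: the $\partial_3u_3$ variant \eqref{100} of Lemma \ref{l2.2} carries no factor $\bigl[\int\|\Delta u\|_2^2\bigr]^{1/(r-1)}$, so $J_2$ contributes only the exponent $1/4<1$ and Young's inequality (with conjugate pair $4/3$, $4$) produces the $\frac{8}{3(r-2)}$ exponents. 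To repair your proof you must reinstate the residual $\int|\nabla_hu||\nabla u|^2$ term and route it through Lemma \ref{l2.2}.
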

\begin{proof} Taking the inner product of the equation \eqref{a} with
$-\Delta u$ in $L^{2}$ , we obtain
$$
\begin{array}{ll}
 \displaystyle \frac{1}{2}\frac{d}{dt}\|\nabla u\|_{2}^{2}&+\nu\|\Delta u\|_{2}^{2}\\
 &=\displaystyle
 \sum_{i,j,k=1}^{3}\int_{\mathbb{R}^{3}}u_{i}\partial_{i}u_{j}\partial_{kk}u_{j} dx\displaystyle \\
 \displaystyle &=\displaystyle
 \sum_{i,j,k=1}^{2}\int_{\mathbb{R}^{3}}u_{i}\partial_{i}u_{j}\partial_{kk}u_{j}dx + \sum_{i=1}^{3}
  \sum_{j=1}^{2}\int_{\mathbb{R}^{3}}u_{i}\partial_{i}u_{j}\partial_{33}u_{j}dx\displaystyle \\
 \displaystyle &\ \ \ \ +\displaystyle
  \sum_{j=1}^{2}
  \sum_{k=1}^{2}\int_{\mathbb{R}^{3}}u_{3}\partial_{3}u_{j}\partial_{kk}u_{j}dx+\sum_{i,k=1}^{3}\int_{\mathbb{R}^{3}}u_{i}\partial_{i}u_{3}\partial_{kk}u_{3}dx\
 \displaystyle \\
 \displaystyle &=\displaystyle
 I_{1}(t)+I_{2}(t)+I_{3}(t)+I_{4}(t).
 \end{array}$$
 The calculation is  similar to  Lemma 2.2 in \cite{[7]}, for the convenience of  readers, we show it
 below. By integrating by parts several times and using the incompressibility
condition, we get
$$
I_{1}(t)=\frac{1}{2}\sum_{i,j,k=1}^{2}\int_{\mathbb{R}^{3}}\partial_{i}u_{i}\partial_{k}u_{j}\partial_{k}u_{j}dx
-\sum_{i,j,k=1}^{2}\int_{\mathbb{R}^{3}}\partial_{k}u_{i}\partial_{i}u_{j}\partial_{k}u_{j}dx=I_{1}^{1}(t)+I_{1}^{2}(t).
$$
The terms $I_{1}^{1}(t),$ $ I_{1}^{1}(t), $ $I_{3}(t)$ and $I_{4}(t)
$ read as
$$
I_{1}^{1}(t)=-\frac{1}{2}\sum_{j,k=1}^{2}\int_{\mathbb{R}^{3}}\partial_{3}u_{3}\partial_{k}u_{j}\partial_{k}u_{j}dx,
$$
$$\begin{array}{ll}\displaystyle
I_{1}^{2}(t)&=\displaystyle-\sum_{i,j,k=1}^{2}\int_{\mathbb{R}^{3}}\partial_{k}u_{i}\partial_{i}u_{j}\partial_{k}u_{j}dx\\
&
=\displaystyle\displaystyle\int_{\mathbb{R}^{3}}\partial_{2}u_{1}\partial_{1}u_{2}\partial_{2}u_{2}dx
+\displaystyle\int_{\mathbb{R}^{3}}\partial_{1}u_{2}\partial_{2}u_{1}\partial_{1}u_{1}dx
+\displaystyle\int_{\mathbb{R}^{3}}\partial_{1}u_{1}\partial_{1}u_{2}\partial_{1}u_{2}dx\\&
\ \
+\displaystyle\int_{\mathbb{R}^{3}}\partial_{1}u_{2}\partial_{2}u_{2}\partial_{1}u_{2}dx
+\displaystyle\int_{\mathbb{R}^{3}}\partial_{2}u_{1}\partial_{1}u_{1}\partial_{2}u_{1}dx
+\displaystyle\int_{\mathbb{R}^{3}}\partial_{1}u_{1}\partial_{1}u_{1}\partial_{1}u_{1}dx\\&\
\
+\displaystyle\int_{\mathbb{R}^{3}}\partial_{2}u_{2}\partial_{2}u_{2}\partial_{2}u_{2}dx
+\displaystyle\int_{\mathbb{R}^{3}}\partial_{2}u_{2}\partial_{2}u_{1}\partial_{2}u_{1}dx\\
&
=\displaystyle-\int_{\mathbb{R}^{3}}(\partial_{2}u_{1}\partial_{1}u_{2}\partial_{3}u_{3}
+\partial_{3}u_{3}\partial_{1}u_{2}\partial_{1}u_{2}+\partial_{2}u_{1}\partial_{3}u_{3}\partial_{2}u_{1})
dx\vspace{1mm}\\
&\ \
-\displaystyle\int_{\mathbb{R}^{3}}(\partial_{1}u_{1}\partial_{1}u_{1}\partial_{3}u_{3}
+\partial_{3}u_{3}\partial_{2}u_{2}\partial_{2}u_{2}-\partial_{1}u_{1}\partial_{3}u_{3}\partial_{2}u_{2})
dx,
\end{array}$$
$$\begin{array}{ll}\displaystyle
I_{3}(t)&=\displaystyle \sum_{j=1}^{2}
\sum_{k=1}^{2}\displaystyle\int_{\mathbb{R}^{3}}u_{3}\partial_{3}u_{j}\partial_{kk}u_{j}dx\\&=\displaystyle-\sum_{j=1}^{2}
\sum_{k=1}^{2}\int_{\mathbb{R}^{3}}\partial_{k}u_{3}\partial_{3}u_{j}\partial_{k}u_{j}dx-\sum_{j=1}^{2}
\sum_{k=1}^{2}\int_{\mathbb{R}^{3}}u_{3}\partial_{3k}u_{j}\partial_{k}u_{j}dx\\
&=\displaystyle-\sum_{j=1}^{2}
\sum_{k=1}^{2}\int_{\mathbb{R}^{3}}\partial_{k}u_{3}\partial_{3}u_{j}\partial_{k}u_{j}dx+\frac{1}{2}\displaystyle\sum_{j=1}^{2}
\sum_{k=1}^{2}\int_{\mathbb{R}^{3}}\partial_{3}u_{3}\partial_{k}u_{j}\partial_{k}u_{j}dx,
\end{array}$$
$$\begin{array}{ll}\displaystyle
I_{4}(t)&=\displaystyle
\sum_{i,k=1}^{3}\int_{\mathbb{R}^{3}}u_{i}\partial_{i}u_{3}\partial_{kk}u_{3}dx\\&=\displaystyle-
\sum_{i,k=1}^{3}\int_{\mathbb{R}^{3}}\partial_{k}u_{i}\partial_{i}u_{3}\partial_{k}u_{3}dx-
 \sum_{i,k=1}^{3}\int_{\mathbb{R}^{3}}u_{i}\partial_{ik}u_{3}\partial_{k}u_{3}dx\\
& =\displaystyle-
\sum_{i,k=1}^{3}\int_{\mathbb{R}^{3}}\partial_{k}u_{i}\partial_{i}u_{3}\partial_{k}u_{3}dx.
\end{array}$$
The above four  equalities imply  that
 $$
|I_{i}|\leq C \int_{\mathbb{R}^{3}}|u_{3}||\nabla u||\Delta u|dx,\
i=1,3,4.
 $$
For $I_{2}$, we have
$$\begin{array}{ll}\displaystyle
|I_{2}|&=\displaystyle|\displaystyle\int_{\mathbb{R}^{3}}\partial_{3}u_{1}\partial_{1}u_{1}\partial_{3}u_{1}
+\partial_{3}u_{2}\partial_{2}u_{1}\partial_{3}u_{1}+\partial_{3}u_{3}\partial_{3}u_{1}\partial_{3}u_{1}\\
\displaystyle&\ \ \ \
+\partial_{3}u_{1}\partial_{1}u_{2}\partial_{3}u_{2}+\partial_{3}u_{2}\partial_{2}u_{2}\partial_{3}u_{2}
+\partial_{3}u_{3}\partial_{3}u_{2}\partial_{3}u_{2}dx|\\
\displaystyle&\leq\displaystyle C\int_{\mathbb{R}^{3}}|u_{3}||\nabla
u||\Delta u|dx+C\int_{\mathbb{R}^{3}}|\nabla_{h}u||\nabla u|^{2}dx.
\end{array}$$
Thus, above inequalities imply  that
\begin{equation} \label{12}\begin{array}{ll}\displaystyle \frac{1}{2}\frac{d}{dt}\|\nabla
u\|_{2}^{2}+\nu\|\Delta u\|_{2}^{2}&\leq \displaystyle
C\int_{\mathbb{R}^{3}}|u_{3}||\nabla u||\Delta
u|dx+C\int_{\mathbb{R}^{3}}|\nabla_{h}u||\nabla u|^{2}dx.\\
\displaystyle&=J_{1}(t)+J_{2}(t).
\end{array}\end{equation}
Similar to \eqref{8}, by Young's  and H$\ddot{\mbox{o}}$lder's
inequalities, we have
\begin{equation} \label{13}\begin{array}{ll}
J_{1}(t)&\leq C\displaystyle\| u_{3}\|_{q}^{\frac{r-1}{r}}\|
\partial_{1}u_{3}\|_{\alpha}^{\frac{1}{r}}\|\nabla u\|_{2}^{\frac{r-2}{r}}
\|\Delta u\|_{2}^{\frac{r+2}{r}},\vspace{2mm}\\
&\leq C\displaystyle\| u_{3}\|_{q}^{\frac{2(r-1)}{r-2}}\|
\partial_{1}u_{3}\|_{\alpha}^{\frac{2}{r-2}}\|\nabla
u\|_{2}^{2}+ \frac{\nu}{4}\|\Delta u\|_{2}^{2},
\end{array}\end{equation}
and \begin{equation} \label{14}J_{2}(t)\leq
C\|\nabla_{h}u\|_{2}\|\nabla u\|_{4}^{2}\leq
C\|\nabla_{h}u\|_{2}\|\nabla u\|_{2}^{\frac{1}{2}}\|\nabla_{h}\nabla
u\|_{2}\|\Delta u\|_{2}^{\frac{1}{2}}.\end{equation} Integrating
\eqref{12} and combing \eqref{1}, \eqref{7}, \eqref{13} and
\eqref{14}, we obtain
\begin{equation} \label{15}\begin{array}{ll}
&\|\nabla
u\|_{2}^{2}+\displaystyle\frac{7\nu}{4}\displaystyle\int_{0}^{t}\|\Delta
u\|_{2}^{2}d\tau\\
\displaystyle &\hspace{0.3cm} \leq\|\nabla
u(0)\|_{2}^{2}+\displaystyle C\int_{0}^{t}\|
u_{3}\|_{q}^{\frac{2(r-1)}{r-2}}\|
\partial_{1}u_{3}\|_{\alpha}^{\frac{2}{r-2}}\|\nabla
u\|_{2}^{2}d\tau\\
&\hspace{0.5cm} +(\sup_{0\leq s\leq t}\|\nabla_{h}
u\|_{2})(\displaystyle\int_{0}^{t}\|\nabla
u\|_{2}^{2}d\tau)^{\frac{1}{4}}\\&\ \ \ \ \
\times(\displaystyle\int_{0}^{t}\|\nabla_{h}\nabla
u\|_{2}^{2}d\tau)^{\frac{1}{2}}(\displaystyle\int_{0}^{t}\|\Delta
u\|_{2}^{2}d\tau)^{\frac{1}{4}}\\
\displaystyle &\hspace{0.3cm} \leq\|\nabla
u(0)\|_{2}^{2}+\displaystyle C\int_{0}^{t}\|
u_{3}\|_{q}^{\frac{2(r-1)}{r-2}}\|
\partial_{1}u_{3}\|_{\alpha}^{\frac{2}{r-2}}\|\nabla
u\|_{2}^{2}d\tau\\
&\hspace{0.5cm} +C\displaystyle[\int_{0}^{t}\|
u_{3}\|_{q}^{\frac{2(r-1)}{r-2}}\|
\partial_{1}u_{3}\|_{\alpha}^{\frac{2}{r-2}}\|\nabla
u\|_{2}^{2}d\tau]^{\frac{r-2}{r-1}}\times
 [\int_{0}^{t}\|\Delta
 u\|_{2}^{2}d\tau]^{\frac{1}{r-1}+{\frac{1}{4}}}\\
 &\hspace{0.5cm} + \displaystyle\|\nabla_{h}
u(0)\|_{2}^{2}(\displaystyle\int_{0}^{t}\|\Delta
u\|_{2}^{2}d\tau)^{\frac{1}{4}}.
\end{array}\end{equation}
 If $ 3r-7>0$,  we have $$\frac{1}{r-1}+{\frac{1}{4}}<1,
$$
then, by H$\ddot{\mbox{o}}$lder's and Young's inequalities, from
\eqref{15} we get
$$\begin{array}{ll}
&\|\nabla
u\|_{2}^{2}+\displaystyle\frac{7\nu}{4}\displaystyle\int_{0}^{t}\|\Delta
u\|_{2}^{2}d\tau\\
\displaystyle &\leq\|\nabla u(0)\|_{2}^{2}+C+\displaystyle
C\int_{0}^{t}\| u_{3}\|_{q}^{\frac{2(r-1)}{r-2}}\|
\partial_{1}u_{3}\|_{\alpha}^{\frac{2}{r-2}}\|\nabla
u\|_{2}^{2}d\tau\\
&\ \ +C\displaystyle[\int_{0}^{t}\|
u_{3}\|_{q}^{\frac{2(r-1)}{r-2}}\|
\partial_{1}u_{3}\|_{\alpha}^{\frac{2}{r-2}}\|\nabla
u\|_{2}^{2}d\tau]^{\frac{4(r-2)}{3r-7}}+\frac{3\nu}{4}\int_{0}^{t}\|\Delta
u\|_{2}^{2}d\tau \vspace{1mm}\\
\displaystyle &\leq\displaystyle C\int_{0}^{t}\|
u_{3}\|_{q}^{\frac{8(r-1)}{3r-7}}\|
\partial_{1}u_{3}\|_{\alpha}^{\frac{8}{3r-7}}\|\nabla
u\|_{2}^{2}d\tau\times[\int_{0}^{t}\|\nabla
u\|_{2}^{2}d\tau]^{\frac{r-1}{3r-7}}\\
 &\ \ + \|\nabla
u(0)\|_{2}^{2}+C+\displaystyle C\int_{0}^{t}\|
u_{3}\|_{q}^{\frac{2(r-1)}{r-2}}\|
\partial_{1}u_{3}\|_{\alpha}^{\frac{2}{r-2}}\|\nabla
u\|_{2}^{2}d\tau+\frac{3\nu}{4}\int_{0}^{t}\|\Delta
u\|_{2}^{2}d\tau.\\
\end{array}$$
 Absorbing the last term and  applying \eqref{1}, it follows that
$$\begin{array}{ll}
\|\nabla
u\|_{2}^{2}+\displaystyle{\nu}\displaystyle\int_{0}^{t}\|\Delta
u\|_{2}^{2}d\tau&\leq\displaystyle\displaystyle
C\int_{0}^{t}\|u_{3}\|_{q}^{\frac{8(r-1)}{3r-7}}\|
\partial_{1}u_{3}\|_{\alpha}^{\frac{8}{3r-7}}\|\nabla
u\|_{2}^{2}d\tau+\|\nabla u(0)\|_{2}^{2}\\
&\ \ \ \ \ +\displaystyle
C\int_{0}^{t}\|u_{3}\|_{q}^{\frac{2(r-1)}{r-2}}\|
\partial_{1}u_{3}\|_{\alpha}^{\frac{2}{r-2}}\|\nabla
u\|_{2}^{2}d\tau+C,
\end{array}
$$
therefore we prove \eqref{10}. If $3r-7=0$,  we have
$$\frac{1}{r-1}+{\frac{1}{4}}=1,$$
then by Young's inequality and \eqref{15} we have
$$\begin{array}{ll}
\|\nabla
u\|_{2}^{2}&+\displaystyle\frac{5\nu}{4}\displaystyle\int_{0}^{t}\|\Delta
u\|_{2}^{2}d\tau\vspace{2mm}\\
\displaystyle &\leq\|\nabla u(0)\|_{2}^{2}+\displaystyle
C\int_{0}^{t}\|u_{3}\|_{q}^{8}\|
\partial_{1}u_{3}\|_{\alpha}^{6}\|\nabla
u\|_{2}^{2}d\tau+ C\\
&\ \ +C\displaystyle[\int_{0}^{t}\|u_{3}\|_{q}^{8}\|
\partial_{1}u_{3}\|_{\alpha}^{6}\|\nabla
u\|_{2}^{2}d\tau]^{\frac{1}{4}}\times
 \int_{0}^{t}\|\Delta
 u\|_{2}^{2}d\tau,
\end{array}$$
which shows that the proof of (ii) is completed. \par Finally, we
prove \eqref{c95} in a similar way, integrating \eqref{12} and using \eqref{1},
\eqref{100}, \eqref{13} and \eqref{14},
we obtain
\begin{equation} \label{93}\begin{array}{ll}
&\|\nabla
u\|_{2}^{2}+\displaystyle\frac{\nu}{2}\displaystyle\int_{0}^{t}\|\Delta
u\|_{2}^{2}d\tau\\
\displaystyle &\hspace{0.3cm} \leq\|\nabla
u(0)\|_{2}^{2}+\displaystyle C\int_{0}^{t}\|
u_{3}\|_{q}^{\frac{2(r-1)}{r-2}}\|
\partial_{3}u_{3}\|_{\alpha}^{\frac{2}{r-2}}\|\nabla
u\|_{2}^{2}d\tau\\
&\hspace{0.5cm} +(\sup_{0\leq s\leq t}\|\nabla_{h}
u\|_{2})(\displaystyle\int_{0}^{t}\|\nabla
u\|_{2}^{2}d\tau)^{\frac{1}{4}}\\&\ \ \ \ \
\times(\displaystyle\int_{0}^{t}\|\nabla_{h}\nabla
u\|_{2}^{2}d\tau)^{\frac{1}{2}}(\displaystyle\int_{0}^{t}\|\Delta
u\|_{2}^{2}d\tau)^{\frac{1}{4}}\\
\displaystyle &\hspace{0.3cm} \leq\|\nabla
u(0)\|_{2}^{2}+\displaystyle C\int_{0}^{t}\|
u_{3}\|_{q}^{\frac{2(r-1)}{r-2}}\|
\partial_{3}u_{3}\|_{\alpha}^{\frac{2}{r-2}}\|\nabla
u\|_{2}^{2}d\tau\\
&\hspace{0.5cm} +C\displaystyle\int_{0}^{t}\|
u_{3}\|_{q}^{\frac{2(r-1)}{r-2}}\|
\partial_{3}u_{3}\|_{\alpha}^{\frac{2}{r-2}}\|\nabla
u\|_{2}^{2}d\tau\times
 [\int_{0}^{t}\|\Delta
 u\|_{2}^{2}d\tau]^{\frac{1}{4}}\\
 &\hspace{0.5cm} + \displaystyle\|\nabla_{h}
u(0)\|_{2}^{2}(\displaystyle\int_{0}^{t}\|\Delta
u\|_{2}^{2}d\tau)^{\frac{1}{4}}.
\end{array}\end{equation}
By using H$\ddot{\mbox{o}}$lder's and Young's inequalities, we immediately
have
\begin{equation*} \label{92}\begin{array}{ll}
\|\nabla
u\|_{2}^{2}+\displaystyle\frac{\nu}{2}\displaystyle\int_{0}^{t}\|\Delta
u\|_{2}^{2}d\tau &\leq\|\nabla u(0)\|_{2}^{2}+\displaystyle
C\int_{0}^{t}\| u_{3}\|_{q}^{\frac{2(r-1)}{r-2}}\|
\partial_{3}u_{3}\|_{\alpha}^{\frac{2}{r-2}}\|\nabla
u\|_{2}^{2}d\tau\\
&\hspace{0.5cm} +C\displaystyle\int_{0}^{t}\|
u_{3}\|_{q}^{\frac{8(r-1)}{3(r-2)}}\|
\partial_{3}u_{3}\|_{\alpha}^{\frac{8}{3(r-2)}}\|\nabla
u\|_{2}^{2}d\tau+C.
\end{array}\end{equation*}
Therefore,  we complete the proof of Lemma \ref{l2.3}.
\end{proof}
\section{{Proof of Main Results}}
\label{} In this section, we  prove our main results.
\par\textbf{Proof of Theorem 1.1}   The
framework of the proof is standard, we refer to \cite{[3]}. Without
loss of  generality, in the proof, we will assume that $j=1, k=3,$
the other cases can be discussed in the same way (for details see
Remark 3.1 below).
\par  It is  well known that there exists a unique
strong solution  $u$ for a short time interval if $u_{0}\in V$. In
addition, this strong solution  $u\in C([0,T^{*});V)\cap
L^{2}(0,T^{*}; H^{2}(\mathbb{R}^{3}))$ is the only weak solution
with the initial datum $u_{0}$, where $(0,T^{*})$ is the maximal
interval of existence of the unique strong solution. If $T^{*}\geq
T,$ then there is nothing to prove. If, on the other hand, $T^{*}<
T,$ then our strategy is to show that the $H^{1}$ norm of this
strong solution is bounded uniformly in time over the interval
$(0,T^{*})$, provided condition \eqref{e} is valid. As a result the
interval $(0,T^{*})$ can not be a maximal interval of existence, and
consequently $T^{*}\geq T,$ which concludes our proof. \par In order
to prove the $H^{1}$ norm of the strong solution $u$ is bounded on
interval $(0,T^{*})$, combing with the energy equality \eqref{1}, it
is sufficient to prove
\begin{equation} \label{a0}
\|\nabla
u\|_{2}^{2}+\displaystyle{\nu}\displaystyle\int_{0}^{t}\|\Delta
u\|_{2}^{2}d\tau\leq C,\ \forall \ t\in(0, T^{*}),
\end{equation}
where the constant $C$ depends on $T$, $K_{1} $ and $M$.

\par Firstly,  by energy inequality \eqref{1}, we have
\begin{equation} \label{a3}
\|u_{3}\|_{ L^{\infty}(0,T; L^{2}(\mathbb{R}^{3}))}\leq
C,\end{equation} where $C$ depends only on  $K_{1}$.  Then we show
\eqref{a0} is true on a small interval $(0,t_1)$ with some
$0<t_1<T^*$, because the constant $C$ in \eqref{a0} depends only on
$ K_{1}$ and $ M$, we give the same process to treat $t_1$ as the start
point. After finite steps, we get \eqref{a0} holds true on the whole
interval $(0,T^{*})$.
\par Now, by using  of Lemma \ref{l2.1}, Lemma \ref{l2.2} and Lemma
\ref{l2.3} with $ \alpha =3, \sigma=\frac{9}{8}$ and $q=2, $ then form
\eqref{4} and \eqref{6}, we have
\begin{equation}\label{ggg} r=\frac{7}{3}, \ s=\frac{2}{3}.\end{equation}
 Applying \eqref{a3} and
\eqref{ggg},  \eqref{11} becomes
\begin{equation} \label{a4}\begin{array}{ll}
\|\nabla
u\|_{2}^{2}&+\displaystyle\frac{5\nu}{4}\displaystyle\int_{0}^{t}\|\Delta
u\|_{2}^{2}d\tau\\ \displaystyle &\leq C
\displaystyle\int_{0}^{t}\|u_{3}\|_{2}^{8}\|
\partial_{1}u_{3}\|_{3}^{6}\|\nabla
u\|_{2}^{2}d\tau+\|\nabla u(0)\|_{2}^{2}+ C\\& \ \ \ \
+C\displaystyle[\int_{0}^{t}\|u_{3}\|_{2}^{8}\|
\partial_{1}u_{3}\|_{3}^{6}\|\nabla
u\|_{2}^{2}d\tau]^{\frac{1}{4}}\times
 \int_{0}^{t}\|\Delta
 u\|_{2}^{2}d\tau.\\
\end{array}\end{equation}
In view of \eqref{e}, we can choose $0<t_{1}<T^{*}$ small enough such
that
$$
C\|u_{3}\|_{ L^{\infty}(0,T;
L^{2}(\mathbb{R}^{3}))}^{2}\|\partial_{1}u_{3}\|_{L^{\infty}(0,T;
L^{3}(\mathbb{R}^{3}))}^{\frac{3}{2}}[\int_{0}^{t_{1}}\|\nabla
u\|_{2}^{2}d\tau]^{\frac{1}{4}}\leq\frac{\nu}{4}.
$$
 Then \eqref{a4}
becomes
\begin{equation} \label{a5}
\sup_{0\leq t\leq t_{1}}\|\nabla
u\|_{2}^{2}+\displaystyle{\nu}\displaystyle\int_{0}^{t_{1}}\|\Delta
u\|_{2}^{2}d\tau\leq\|\nabla u(0)\|_{2}^{2}+C.\end{equation} From
\eqref{a5}, we have
$$\|\nabla_{h}u(t_{1})\|_{2}<\|\nabla u(0)\|_{2}+C,\ \ \|\nabla u(t_{1})\|_{2}<\|\nabla u(0)\|_{2}+C,
$$ so we can repeat the above argument with initial value at
$t_{1}$ to obtain the similar estimate \eqref{7} in Lemma
\ref{l2.2}, and we have, for $t_{1}<t<T^{*}$,
$$
\begin{array}{ll}
 \displaystyle &\|\nabla_{h}u\|_{2}^{2}+\displaystyle\nu\int_{t_1}^{t}\|\nabla_{h}\nabla
u\|_{2}^{2}d\tau\vspace{2mm}\\
 \displaystyle &\ \ \ \ \ \leq
C\displaystyle[\int_{t_1}^{t}\|u_{3}\|_{2}^{8}\|
\partial_{1}u_{3}\|_{3}^{6}\|\nabla
u\|_{2}^{2}d\tau]^{\frac{1}{4}}\times
 [\int_{t_1}^{t}\|\Delta u\|_{2}^{2}d\tau]^{\frac{3}{4}}\vspace{2mm}\\
 &\ \ \ \ \ \ \ \ +\displaystyle\|\nabla_{h}
u(t_1)\|_{2}^{2}.\vspace{2mm}\\
 \end{array}
$$
From above inequality, we  obtain a similar estimate as \eqref{a4},
for $t_{1}<t<T^{*}$,
$$\begin{array}{ll}
\|\nabla
u\|_{2}^{2}&+\displaystyle\frac{5\nu}{4}\displaystyle\int_{t_1}^{t}\|\Delta
u\|_{2}^{2}d\tau\\ \displaystyle &\leq C
\displaystyle\int_{t_1}^{t}\|u_{3}\|_{2}^{8}\|
\partial_{1}u_{3}\|_{3}^{6}\|\nabla
u\|_{2}^{2}d\tau+\|\nabla u(t_1)\|_{2}^{2}+ C\\& \ \ \ \
+C\displaystyle[\int_{t_1}^{t}\|u_{3}\|_{2}^{8}\|
\partial_{1}u_{3}\|_{3}^{6}\|\nabla
u\|_{2}^{2}d\tau]^{\frac{1}{4}}\times
 \int_{t_1}^{t}\|\Delta
 u\|_{2}^{2}d\tau.\\
\end{array}$$
There exists a number  $t_{2}$  such that
$$
C\|u_{3}\|_{ L^{\infty}(0,T;
L^{2}(\mathbb{R}^{3}))}^{2}\|\partial_{1}u_{3}\|_{L^{\infty}(0,T;
L^{3}(\mathbb{R}^{3}))}^{\frac{3}{2}}[\int_{t_{1}}^{t_{2}}\|\nabla
u\|_{2}^{2}d\tau]^{\frac{1}{4}}\leq\frac{\nu}{4},
$$
and we have
$$\sup_{ t_{1}\leq t\leq t_{2}} \|\nabla
u\|_{2}^{2}+\displaystyle{\nu}\displaystyle\int_{t_{1}}^{t_{2}}\|\Delta
u\|_{2}^{2}d\tau\leq\|\nabla u(t_{1})\|_{2}^{2}+C\leq  \|\nabla
u(0)\|_{2}^{2}+C.
$$
Then we can repeat the above process from $t_{2}$, if $t_{2}<
T^{*}$. Actually, since $\partial_{1}u_{3}\in L^{\infty}(0,T;
L^{3}(\mathbb{R}^{3}))$, and the coefficients involving
$\|\partial_{1}u_{3}\|_{L^{\infty}(0,T; L^{3}(\mathbb{R}^{3}))}$,
depend only on $ K_{1}$ and $ M$, after finite steps of the process
of the bootstrap iteration, we can get an estimate on the whole time
interval
$$
\|\nabla
u\|_{2}^{2}+\displaystyle{\nu}\displaystyle\int_{0}^{t}\|\Delta
u\|_{2}^{2}d\tau\leq\|\nabla u(0)\|_{2}^{2}+C,
$$
for all $t\in(0,T^{*})$. Therefore, the $ H^1$ norm of the strong
solution $u$ is bounded on the maximal interval of existence $(0,
T^{*})$. This completes the proof of Theorem \ref{t1.1}.
\begin{rem}In above proof, we note that if we give the
additional assumptions  on $\partial_{3}u_{3}$, namely, we choose
$j=3, k=3, $ then the inequality \eqref{5} may be replaced by
$$
\begin{array}{ll}
 \displaystyle
 \frac{1}{q}\frac{d}{dt}\|u_{3}\|_{q}^{q}&+C(q)\nu\|\nabla|u_{3}|^{\frac{q}{2}}\|_{2}^{2}\\
 \displaystyle &=\displaystyle\| u\|_{2}^{s}\|\nabla
u\|_{2}^{\frac{8}{3}-s}\|u_{3}\|_{q}^{q-2}\|\partial_{3}u_{3}\|_{\alpha}^{{1}/{3}},
\end{array}
$$
and the inequality \eqref{8} may be replaced by
$$
\begin{array}{ll}
 \displaystyle \frac{1}{2}\frac{d}{dt}\|\nabla_{h}u\|_{2}^{2}&+\nu\|\nabla_{h}\nabla u\|_{2}^{2}\vspace{2mm}\\
\displaystyle &\leq C\displaystyle\| u_{3}\|_{q}^{\frac{r-1}{r}}\|
\partial_{3}u_{3}\|_{\frac{q}{q-r+1}}^{\frac{1}{r}}\|\nabla u\|_{2}^{\frac{r-2}{r}}
 \|\partial_{1}\nabla u\|_{2}^{\frac{1}{r}} \|\partial_{2}\nabla u\|_{2}^{\frac{1}{r}}
\|\nabla_{h}\nabla u\|_{2},\vspace{2mm}\\
\displaystyle &\leq C\displaystyle\| u_{3}\|_{q}^{2}\|
\partial_{3}u_{3}\|_{\alpha}^{\frac{2}{r-1}}\|\nabla u\|_{2}^{\frac{2(r-2)}{r-1}}
 \|\Delta u\|_{2}^{\frac{2}{r-1}} +\frac{\nu}{2}
\|\nabla_{h}\nabla u\|_{2}^{2},
\end{array}
$$
and \eqref{7} becomes
$$
\begin{array}{ll}
 \displaystyle \|\nabla_{h}u\|_{2}^{2}&+\displaystyle\nu\int_{0}^{t}\|\nabla_{h}\nabla
u\|_{2}^{2}d\tau\vspace{2mm}\\
\displaystyle &\leq
C\displaystyle[\int_{0}^{t}\|u_{3}\|_{q}^{\frac{2(r-1)}{r-2}}\|
\partial_{3}u_{3}\|_{\alpha}^{\frac{2}{r-2}}\|\nabla
u\|_{2}^{2}d\tau]^{\frac{r-2}{r-1}}\times
 [\int_{0}^{t}\|\Delta u\|_{2}^{2}d\tau]^{\frac{1}{r-1}}\vspace{2mm}\\
 &\ \ \ \ \ + C\displaystyle\|\nabla_{h}
u(0)\|_{2}^{2}.
 \end{array}
$$
The other cases can be done in a similar way. Since if we want to
provide the conditions on $\partial_{j}u_{2}, \ j=1,2,3$, in Lemma
\ref{l2.1}, we will use $|u_{2}|^{q-2}u_{2},$ as test function in
the equation for $u_{2},$  and we can get the similar results to
that in Lemma \ref{l2.2} and Lemma \ref{l2.3}. Therefore, this
method is suitable for every one of the nine  entries of the gradient
tensor.\end{rem}
\par \textbf{Proof of Theorem \ref{t1.3}} We take different strategy to prove Theorem \ref{t1.3}
(i) and (ii) in turn. The framework of  our proof  of  Theorem
\ref{t1.3}  is  also  standard. As to the second part of this
theorem, we give another inequality on $u_3$, and then we prove \eqref{a0}.\\
$\bullet j\ne k$
\par To prove Theorem \ref{t1.3} (i), we take the same strategy as
that in  Theorem \ref{t1.1}, and $(0,T^{*})$ is the maximal interval
of existence of the strong solution.  Next, we show \eqref{a0} is
true under the condition of \eqref{z}-\eqref{f}. Similar to the
proof of Theorem \ref{t1.1}, we take $j=1, k=3$, and prove the
boundedness of $u_3$ in $L^{\infty}(0,T; L^{q})$ with some $q$ at
first,  then apply Lemma \ref{l2.3}  and Gronwall's inequality to
get \eqref{a0}.
\par For
$$\alpha\in(3, \infty),
$$
we choose the parameters in the following form
\begin{equation}\label{b86}\left\{\begin{array}{l}
\displaystyle\frac{1}{\sigma_1}=\frac{(7-\frac{3}{\alpha})+\sqrt{\frac{9}{\alpha^2}-\frac{12}{\alpha}+103}}{18},\\
\displaystyle q_1=\frac{6\alpha\sigma_1}{3\alpha+\sigma_1},\\
\displaystyle s_1=\frac{3-2\sigma_1}{\sigma_1},\\
\displaystyle r_1=\frac{6(\alpha-1)\sigma_1}{3\alpha+\sigma_1}+1.
\end{array}
\right.
\end{equation}
Then, the above parameters satisfy \eqref{3} and \eqref{4}, namely,
$$
\frac{2}{q_{1}}=\frac{1}{3{\alpha}}+\frac{1}{\sigma_{1}}\ \
\mbox{and} \ \ \frac{2}{3s_{1}-2}=\frac{2\sigma_{1}}{9-8\sigma_{1}}.
$$
We choose
\begin{equation}\label{b85}
\beta=\frac{2\sigma_1}{9-8\sigma_1},
\end{equation}
then we have $$\frac{1}{\sigma_1}=\frac{2}{9}(\frac{1}{\beta}+4),$$
 and \eqref{b86}, we get  $\frac{3}{2\alpha}+\frac{2}{\beta}=f(\alpha)$. We denote
\begin{equation}\label{b89}
V_{1}(t)=\int_{0}^{t}\|
\partial_{1}u_{3}\|_{\alpha}^{\beta}\|\nabla
u\|_{2}^{2}d\tau=\int_{0}^{t}\|
\partial_{1}u_{3}\|_{\alpha}^{\frac{2\sigma_1}{9-8\sigma_1}}\|\nabla
u\|_{2}^{2}d\tau.
\end{equation}
By  the following fact
\begin{equation}\label{a30}\begin{array}{ll}
\displaystyle\frac{18\sigma-5\sigma^{2}}{3\sigma^{2}+14\sigma-18}&\displaystyle-\frac{\sigma}{\sigma-1}
\displaystyle
=\displaystyle\frac{(-8\sigma+9)\sigma^{2}}{(3\sigma^{2}+14\sigma-18)(\sigma-1)}>0\vspace{2mm}\\
\displaystyle &\ \ \ \ \ (\mbox{when} \displaystyle
\frac{\sqrt{103}-7}{3}=\sigma_{2}<\sigma<9/8),
\end{array}\end{equation}
where $\sigma_2=\frac{\sqrt{103}-7}{3}$ is the positive solution of
$3\sigma^{2}+14\sigma-18=0$. Therefore,  for every $\alpha$, we have
(in fact, from \eqref{b86} we get
$\alpha=\frac{18\sigma_1-5\sigma^{2}_{1}}{3(3\sigma_{1}^{2}+14\sigma_{1}-18)})$
\begin{equation}\label{a31}\alpha>\frac{\sigma_1}{3(\sigma_1-1)}.\end{equation}
Applying \eqref{b86} and \eqref{a31}, we get
\begin{equation}\label{d32}
\frac{2}{q_{1}}<\frac{\sigma_{1}-1}{\sigma_{1}}
+\frac{1}{\sigma_{1}}=1\Longrightarrow q_{1}>2,
\end{equation}
thus \eqref{2} is satisfied with $\alpha, \sigma_1$ and $ q_1$,  and
for $s_{1}$, we have $$\frac{2}{3}<s_1<1.$$ Integrating \eqref{4}
with $q=q_1, \sigma=\sigma_1$ and $s=s_1$, in view of H\"{o}lder's
inequality and \eqref{1}, we get
\begin{equation}\label{a10}
\begin{array}{ll}
 \displaystyle
\|u_{3}\|_{q_1}^{2}&\leq\|u_{3}(0)\|_{q_1}^{2}+\displaystyle
C\int_{0}^{t} \|\nabla
u\|_{2}^{\frac{8}{3}-s_1}\|\partial_{1}u_{3}\|_{\alpha}^{{1}/{3}}d\tau\\
&\leq\|u_{3}(0)\|_{q_1}^{2}+\displaystyle C[\int_{0}^{t} \|\nabla
u\|_{2}^{2}d\tau]^{\frac{8-3s_1}{6}}
[\int_{0}^{t}\|\partial_{1}u_{3}\|_{\alpha}^{\frac{2}{3s_1-2}}d\tau]^{\frac{3s_1-2}{6}}\\
&\leq\|u_{3}(0)\|_{q_1}^{2}+\displaystyle C
[\int_{0}^{t}\|\partial_{1}u_{3}\|_{\alpha}^{\frac{2}{3s_1-2}}d\tau]^{\frac{3s_1-2}{6}}\\
&=\|u_{3}(0)\|_{q_1}^{2}+\displaystyle C
[\int_{0}^{t}\|\partial_{1}u_{3}\|_{\alpha}^{\frac{2\sigma_1}{9-8\sigma_1}}d\tau]^{\frac{9-8\sigma_1}{6\sigma_1}}.
\end{array}\end{equation}
From \eqref{b86}, we have
$\alpha=\frac{18\sigma_1-5\sigma^{2}_{1}}{3(3\sigma_{1}^{2}+14\sigma_{1}-18)}$,
 it is not difficult to see that
$k(\sigma)=\frac{18\sigma-5\sigma^{2}}{3(3\sigma^{2}+14\sigma-18)}$
is a decreasing  function with respect to the variable $\sigma$, and
\begin{equation}\label{d24}
\lim_{\sigma\rightarrow{\frac{9}{8}^{-}}}\frac{18\sigma-5\sigma^{2}}{3(3\sigma^{2}+14\sigma-18)}=3,
\ \mbox{and}\
\lim_{\sigma\rightarrow{\sigma_{2}^{+}}}\frac{18\sigma-5\sigma^{2}}{3(3\sigma^{2}+
14\sigma-18)}=\infty.\end{equation}  By \eqref{d24}, we have that
$\sigma_{1}$ satisfies  $\sigma_{2}<\sigma_{1}<\frac{9}{8}$.
Together with $3<\alpha<\infty$, we obtain $2<q_1<\frac{9}{4}$. From
$u_{0}\in V$, by Sobolev embedding we have $\|u_{3}(0)\|_{q_1}<C$
for some $C>0$.  By virtue of
\begin{equation}\label{a13}
\frac{4(\sigma+3{\alpha})}{3(3\sigma-2){\alpha}-11\sigma}
-\frac{2\sigma}{9-8\sigma}
=\frac{2[-5\sigma^{2}+18\sigma+3(-3\sigma^{2}-14\sigma+18)
{\alpha}]}{(3(3\sigma-2){\alpha}-11\sigma)(9-8\sigma)},
\end{equation}
we have
\begin{equation}\label{b95}\frac{4(\sigma_{1}+3{\alpha})}{3(3\sigma_{1}-2){\alpha}-11\sigma_{1}}
=\frac{2\sigma_{1}}{9-8\sigma_{1}},
\end{equation}
where $3(3\sigma_{1}-2){\alpha}-11\sigma_{1}>0$ (see
\eqref{a15} and \eqref{b97} below).
 Therefore, applying
\eqref{a10} and \eqref{b95},  we obtain
\begin{equation}\label{a51}
\begin{array}{ll}\|u_{3}\|_{q_1}^{2}&\leq\|u_{3}(0)\|_{q_1}^{2}+\displaystyle C
[\int_{0}^{t}\|\partial_{1}u_{3}\|_{\alpha}^{\frac{2\sigma_1}{9-8\sigma_1}}d\tau]^{\frac{9-8\sigma_1}{6\sigma_1}}.\\
\end{array}\end{equation}
From the condition \eqref{f}, we have
\begin{equation}\label{a19}
\|u_{3}\|_{ L^{\infty}(0,T; L^{q_{1}}(\mathbb{R}^{3}))}\leq
C,\end{equation} where $C$ depend  on $T, K_{1}, M$.     The
selected $r_1$ in \eqref{b86} satisfies
\begin{equation}\label{b88}
r_{1}=\frac{(q_{1}+1){\alpha}-q_{1}}{{\alpha}},\end{equation} and
from \eqref{3}, \eqref{4} and \eqref{6}, we have
\begin{equation}\label{b98}
\frac{8}{3r_1-7}=\frac{4(\sigma_1+3{\alpha})}{3(3\sigma_1-2){\alpha}-11\sigma_1}.
\end{equation}  Because
\begin{equation}\label{a15}\begin{array}{ll}
\displaystyle\frac{\sigma}{\sigma-1}\displaystyle-\frac{11\sigma}{3\sigma-2}
\displaystyle
&=\displaystyle\frac{(-8\sigma+9)\sigma}{(\sigma-1)(3\sigma-2)}>0,
\displaystyle \ \forall  \ \displaystyle 1<\sigma<9/8,
\end{array}\end{equation}
 and take into account of \eqref{a31}, \eqref{a15} implies that
\begin{equation}\label{b97}\alpha> \frac{11\sigma_1}{3(3\sigma_1-2)},
\ \mbox{namely}\ 3(3\sigma_1-2){\alpha}-11\sigma_1>0,\end{equation}
and then $\frac{8}{3r_1-7}>0$ in \eqref{b98}. From \eqref{b88}, the
fact that  $3<\alpha<\infty$ and $q_1>2$, one has
\begin{equation}\label{a8}
r_1>\frac{7}{3}.
\end{equation}
By Lemma 2.3, we have
\begin{equation}\label{a11}\begin{array}{ll}
\|\nabla
u\|_{2}^{2}&+\displaystyle{\nu}\displaystyle\int_{0}^{t}\|\Delta
u\|_{2}^{2}d\tau\\ &\leq\displaystyle\displaystyle
C\int_{0}^{t}\|u_{3}\|_{q_1}^{\frac{8(r_1-1)}{3r_1-7}}\|
\partial_{1}u_{3}\|_{\alpha}^{\frac{8}{3r_1-7}}\|\nabla
u\|_{2}^{2}d\tau+\|\nabla u(0)\|_{2}^{2}\\
&\ \ \ \ \ +\displaystyle
C\int_{0}^{t}\|u_{3}\|_{q_1}^{\frac{2(r_1-1)}{r_1-2}}\|
\partial_{1}u_{3}\|_{\alpha}^{\frac{2}{r_1-2}}\|\nabla
u\|_{2}^{2}d\tau+C.
\end{array}\end{equation}
Applying \eqref{a11}   and \eqref{a19}, and the fact
$\frac{8}{3r-7}>\frac{2}{r-2}$  for all $r$ satisfying \eqref{9}, we
have
$$\begin{array}{ll}
\|\nabla
u\|_{2}^{2}+\displaystyle{\nu}\displaystyle\int_{0}^{t}\|\Delta
u\|_{2}^{2}d\tau\leq\displaystyle\displaystyle C V_{1}(t)+\|\nabla u(0)\|_{2}^{2}+C.\\
\end{array}
$$
The result for $\alpha > 3$ follows from Gronwall's inequality, and this end the proof of (i).\\
$\bullet j=k$
\par For  Theorem \ref{t1.3} (ii), without loss of generality,  we
 assume $j=3, k=3$. For every
\begin{equation}\label{d1} \alpha\in(\frac{9}{5},\infty),\end{equation}
we set
\begin{equation}\label{d2}\left\{\begin{array}{l}
\displaystyle \frac{1}{\mu}=\frac{1-\frac{12}{\alpha}+\sqrt{\frac{144}{\alpha^2}-\frac{264}{\alpha}+289}}{24},\\
\displaystyle q_2=\frac{2\alpha\mu}{\alpha+\mu},\vspace{2mm}\\
\displaystyle r=\frac{2\mu\alpha-\mu+\alpha}{\alpha+\mu}.
\end{array}
\right.
\end{equation}
 From \eqref{d2}, we have
\begin{equation}\label{d3}
\alpha=\frac{12\mu+5\mu^2}{6\mu^2+\mu-12},
\end{equation}
and note that $h(\mu):=\frac{12\mu+5\mu^2}{6\mu^2+\mu-12}$ is
a decreasing function of $\mu$, and
\begin{equation}\label{d4}
\lim_{\mu\rightarrow{3^{-}}}\frac{12\mu+5\mu^2}{6\mu^2+\mu-12}=\frac{9}{5}
\ \mbox{and}\
\lim_{\mu\rightarrow{{\frac{4}{3}}^{+}}}\frac{12\mu+5\mu^2}{6\mu^2+\mu-12}=\infty.\end{equation}
By \eqref{d1} and \eqref{d4}, we have
\begin{equation}\label{d5}\frac{4}{3}<\mu<3,\end{equation}
and \eqref{d2} follows
\begin{equation}\label{d6} \frac{1}{\mu}+\frac{1}{\alpha}+\frac{q_2-2}{q_2}=1.\end{equation}
On the other hand,
$$
\frac{12\mu+5\mu^2}{6\mu^2+\mu-12}-\frac{\mu}{\mu-1}=\frac{\mu^2(6-\mu)}{(6\mu^2+\mu-12)(\mu-1)}>0
\ \mbox{with} \ \frac{4}{3}<\mu<3.
$$
Combing \eqref{d3} and above inequality, we have
$\alpha>\frac{\mu}{\mu-1}$, and hance \eqref{d2} implies  $q_2>2.$
We choose
\begin{equation}\label{d7}
\beta=\frac{2\mu}{3-\mu},
\end{equation}
then we have
$$\frac{1}{\mu}=\frac{2}{3}(\frac{1}{\beta}+\frac{1}{2}),$$
by \eqref{d2} and \eqref{d7}, we have
$\frac{3}{2\alpha}+\frac{2}{\beta}=g(\alpha)$. We denote
\begin{equation}\label{d8}
V_{2}(t)=\int_{0}^{t}\|
\partial_{3}u_{3}\|_{\alpha}^{\beta}\|\nabla
u\|_{2}^{2}d\tau=\int_{0}^{t}\|
\partial_{3}u_{3}\|_{\alpha}^{\frac{2\mu}{3-\mu}}\|\nabla
u\|_{2}^{2}d\tau.
\end{equation} Next, we give another estimates on
$u_3$. We use $|u_{3}|^{q_2-2}u_{3}$  as test function  in the
equation \eqref{a} for $u_{3}.$ By using of Gagliardo-Nirenberg and
H$\ddot{\mbox{o}}$lder's inequalities, and applying  the inequality
\eqref{p}, we have
\begin{equation}\label{d9}
\begin{array}{ll}
 \displaystyle \frac{1}{q}\frac{d}{dt}\|u_{3}\|_{q_2}^{q_2}&+C(q_2)\nu\|\nabla|u_{3}|^{\frac{q_2}{2}}\|_{2}^{2}\\
 &=\displaystyle
 -\int_{\mathbb{R}^{3}}\partial_{3}p|u_{3}|^{q_2-2}u_{3}dx\displaystyle  \vspace{2mm}\\
 &\leq C\displaystyle\int_{\mathbb{R}^{3}}|p||u_{3}|^{q_2-2}|\partial_{3}u_{3}|dx\displaystyle  \vspace{2mm}\\
 \displaystyle &\leq\displaystyle
  C\|p\|_{\mu}\|u_{3}\|_{q_2}^{q_2-2}\|\partial_{3}u_{3}\|_{{\alpha}}\ \ \displaystyle(\mbox{by} \eqref{d6} ) \vspace{2mm}\\
 \displaystyle &\leq\displaystyle   C \|u
 \|_{2\mu}^2\|u_{3}\|_{q_2}^{q_2-2}\|\partial_3u_{3}\|_{\alpha}\displaystyle
 \ (\mbox{by }\eqref{p})\vspace{2mm}\\
 \displaystyle &\leq\displaystyle C\| u\|_{2}^{\frac{3-\mu}{\mu}}\|\nabla
 u\|_{2}^{\frac{3(\mu-1)}{\mu}}
 \|u_{3}\|_{q_2}^{q_2-2}\|\partial_{3}u_{3}\|_{\alpha}.
 \vspace{2mm}
\end{array}
\end{equation}
The above inequality immediately implies that
\begin{equation}\label{d10}
\begin{array}{ll}
\displaystyle
\frac{1}{2}\frac{d}{dt}\|u_{3}\|_{q_2}^{2}\displaystyle
\leq\displaystyle C\| u\|_{2}^{\frac{3-\mu}{\mu}}\|\nabla
 u\|_{2}^{\frac{3(\mu-1)}{\mu}}\|\partial_{3}u_{3}\|_{\alpha}.
\end{array}
\end{equation}
In view of  \eqref{d5}, we have $\frac{3(\mu-1)}{\mu}<2$, applying
Young's inequality, we have
\begin{equation}\label{d11}
\begin{array}{ll}
\displaystyle
\frac{1}{2}\frac{d}{dt}\|u_{3}\|_{q_2}^{2}\displaystyle
\leq\displaystyle C\|\nabla
 u\|_{2}^{2}+\| u\|_{2}^{2}\|\partial_{3}u_{3}\|_{\alpha}^{\frac{2\mu}{3-\mu}}.
\end{array}
\end{equation}
Integrating \eqref{d11} on time, and by energy inequality \eqref{1},
we obtain
\begin{equation}\label{d12}
\begin{array}{ll}
\displaystyle \|u_{3}\|_{q_2}^{2}\displaystyle \leq\displaystyle
\|u_{3}(0)\|_{q_2}^{2}+C\int_{0}^{t}\|\partial_{3}u_{3}\|_{\alpha}^{\frac{2\mu}{3-\mu}}d\tau.
\end{array}
\end{equation}
By the condition \eqref{zz2}, $\eqref{d2}$ and
\eqref{d5}, we have  $q_2<6$. Note that
$\|u_{3}(0)\|_{q_2}<C$ for some $C>0$, we get
\begin{equation}\label{d13}
u_3\in L^{\infty}(0, T; L^{q_2}(\mathbb{R}^{3})).
\end{equation}
Keeping in mind that  we have  another estimates in Lemma \ref{l2.3}
\begin{equation} \label{d14}\begin{array}{ll}
&\|\nabla
u\|_{2}^{2}+\displaystyle\frac{\nu}{2}\displaystyle\int_{0}^{t}\|\Delta
u\|_{2}^{2}d\tau\\
&\ \ \ \ \ \  \leq\|\nabla u(0)\|_{2}^{2}+\displaystyle
C\int_{0}^{t}\| u_{3}\|_{q_2}^{\frac{2(r-1)}{r-2}}\|
\partial_{3}u_{3}\|_{\alpha}^{\frac{2}{r-2}}\|\nabla
u\|_{2}^{2}d\tau\\
&\hspace{1.3cm} +C\displaystyle\int_{0}^{t}\|
u_{3}\|_{q_2}^{\frac{8(r-1)}{3(r-2)}}\|
\partial_{3}u_{3}\|_{\alpha}^{\frac{8}{3(r-2)}}\|\nabla
u\|_{2}^{2}d\tau+C.
\end{array}\end{equation}
By \eqref{d2}, we have $r=\frac{(q_{2}+1){\alpha}-q_{2}}{{\alpha}},$
and $r>2,$ therefore
$$
\frac{8}{3(r-2)}=\frac{8(\mu+\alpha)}{3(2\mu\alpha-3\mu-\alpha)}>0.
$$
Moreover, from \eqref{d3}, we have (note that
$2\mu\alpha-3\mu-\alpha>0$)
$$
\frac{8(\mu+\alpha)}{3(2\mu\alpha-3\mu-\alpha)}-\frac{2\mu}{3-\mu}=\frac{2[12\mu+5\mu^2+(-6\mu^2-\mu+12)\alpha]}{3(2\mu\alpha-3\mu-\alpha)(3-\mu)}=0.
$$
 Combing \eqref{d13} and
\eqref{d14}, and the fact $\frac{2}{r-2}<\frac{8}{3(r-2)},$ we have
$$\begin{array}{ll}
\|\nabla
u\|_{2}^{2}+\displaystyle{\nu}\displaystyle\int_{0}^{t}\|\Delta
u\|_{2}^{2}d\tau\leq\displaystyle\displaystyle C V_{2}(t)+\|\nabla u(0)\|_{2}^{2}+C,\\
\end{array}
$$
and end the proof for $
\alpha\in\left(\frac{9}{5},\infty\right)$ by  using of  Gronwall's inequality.

\section*{Acknowledgement}
The second author would like to thank Dr. Ting Zhang   for his
helpful suggestions. This work is supported partially by NSFC
10931007, and Zhejiang NSF of China Z6100217.

\bibliographystyle{elsarticle-num}

\newpage
\begin{figure} \centering
\includegraphics[width=0.8\textwidth,height=0.3\textheight]{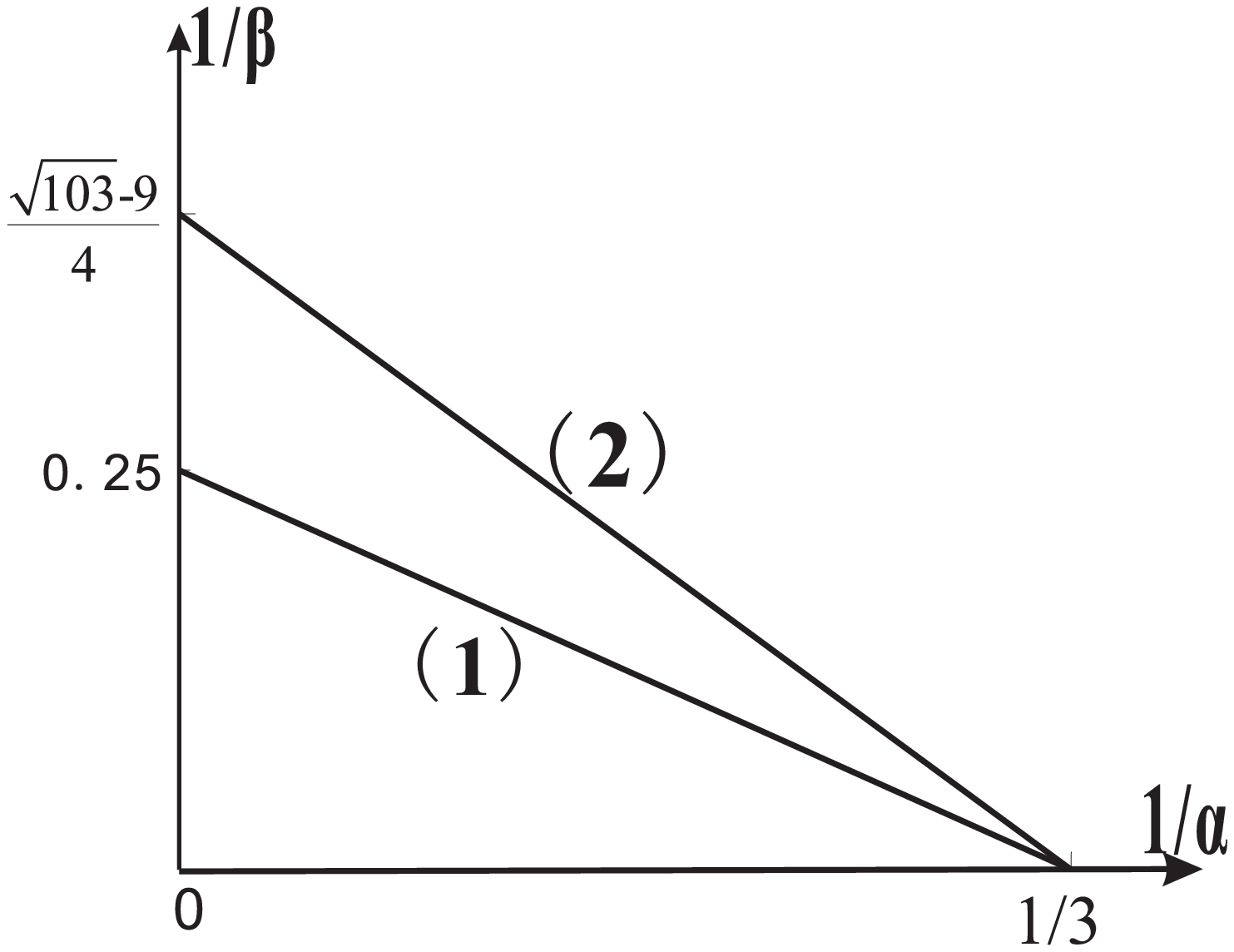}
\caption{ Case of $j\neq k$}\label{fig:128} The line "\textbf{(1)}"
is the result of C.S. Cao, E.S. Titi in \cite{[2]}, which signifies
\eqref{c}. The line "\textbf{(2)}" is our result, which means
\eqref{f}.
\end{figure}

\begin{figure}
\centering
\includegraphics[width=0.8\textwidth,height=0.35\textheight]{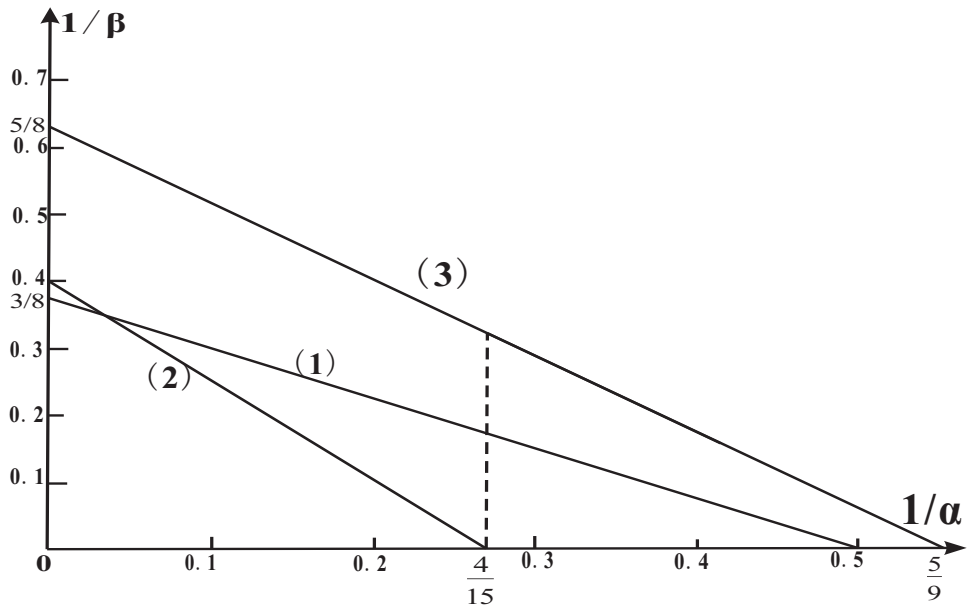}
\caption{Case of $j= k$} \label{fig:131}The line "\textbf{(1)}"
signifies \eqref{d}, which is also considered by  C.S. Cao, E.S.
Titi in \cite{[2]}. The line "\textbf{(3)}" is our result, which
mean  \eqref{zz2}.  The result of Y. Zhou, M. Pokorn$\acute{\mbox{y}
}$  in \cite{[26]} is showed by line "\textbf{(2)}".
\end{figure}

\end{document}